\title{\sc{Kazhdan's property $(T)$ with respect to non-commutative $L_{p}$-spaces}}
\theoremstyle{proclaim}
\newtheorem{thm}{Theorem}[section]
\newtheorem{df}[thm]{Definition}
\newtheorem{prp}[thm]{Proposition}
\newtheorem{lem}[thm]{Lemma}
\newtheorem{rem}[thm]{Remark}
\newtheorem{cor}[thm]{Corollary}
\newcommand{\esp}{\textrm{ }}
\newcommand{\n}{\vert\vert}
\author{\sc{Baptiste OLIVIER}}
\begin{document}
\maketitle

\begin{abstract}
We show that a group with Kazhdan's property $(T)$ has property $(T_{B})$ for $B$ the Haagerup non-commutative $L_{p}(\mathcal{M})$-space associated with a von Neumann algebra $\mathcal{M}$, $1<p<\infty$. We deduce that higher rank groups have property $F_{L_{p}(\mathcal{M})}$.
\end{abstract}

{\bf Keywords:} Haagerup non-commutative $L_{p}(\mathcal{M})$-spaces, property $(T)$, group representations, Mazur map, property $F_{L_{p}(\mathcal{M})}$.

\section{\sc{Introduction}}

Kazhdan's property $(T)$ of a topological group $G$ is an important rigidity property, defined in terms of the unitary representations of $G$ on Hilbert spaces. We recall the precise definition :
\begin{df}
\rm{}A pair $(G,H)$ of topological groups, where $H$ is a closed subgroup of $G$, is said to have relative property $(T)$ if there exist a compact subset $Q$ of $G$ and $\epsilon>0$ such that : whenever a unitary representation $\pi$ of $G$ on a Hilbert space $\mathcal{H}$ has a $(Q,\epsilon)$-invariant vector, that is a vector $\xi\in\mathcal{H}$ such that $$\sup_{g\in Q}\vert\vert\pi(g)\xi-\xi\vert\vert<\epsilon\vert\vert\xi\vert\vert$$ then $\pi$ has a non-zero $\pi(H)$-invariant vector.   The pair $(Q,\epsilon)$ is called a Kazhdan pair.\\
A topological group $G$ is said to have property $(T)$ if the pair $(G,G)$ has relative property $(T)$. 
\end{df}
For more details on property $(T)$, see the monography \cite{bekka2008kazhdan}.\\
The following variant of this property for Banach spaces was recently introduced by Bader, Furman, Gelander and Monod in \cite{bader2007propertyTLp}. Let $B$ be a Banach space and $O(B)$ the orthogonal group of $B$, that is, the group of linear bijective isometries of $B$. Recall that an orthogonal representation of a topological group $G$ on a Banach space $B$ is a homomorphism $\rho:G\rightarrow O(B)$ such that the map $g\mapsto \rho(g)x$ is continuous for every $x\in B$. If $\rho:G\rightarrow O(B)$ is an orthogonal representation of a group $G$, we denote the subspace of $\rho(G)$-invariant vectors by $$B^{\rho(G)}=\{x\in B\esp\vert \esp\rho(g)x=x\textrm{ for all }g\in G\esp\}.$$ Observe that $B^{\rho(G)}$ is invariant under $G$. The representation $\rho$ is said to almost have invariant vectors if it has $(Q,\epsilon)$-invariant vector for every compact subset $Q$ of $G$ and $\epsilon>0$. 
\begin{df}
\rm{}Let $G$ be a topological group and $H$ be a closed $normal$ subgroup of $G$. The pair $(G,H)$ has relative property $(T_{B})$ for a Banach space $B$ if, for any orthogonal representation $\rho:G\rightarrow O(B)$, the quotient representation $\rho{'}:G\rightarrow O(B/B^{\rho(H)})$ does not almost have $\rho{'}(G)$-invariant vectors.\\
A topological group $G$ has property $(T_{B})$ if the pair $(G,G)$ has relative property $(T_{B})$.
\end{df}
The authors of \cite{bader2007propertyTLp} studied the case where $B$ is a superreflexive Banach space, and among other things, they showed that a group which has property $(T)$ has  property $(T_{L^{p}(\mu)})$ for $\mu$ a $\sigma$-finite measure on a standard Borel space $(X,\mathcal{B})$ and $1<p<\infty$. We will extend this result to the non- commutative setting.\\
Non-commutative $L_{p}$-spaces were introduced by Dixmier \cite{Dixmier1953introductionLp} and studied by various authors, among them Yeadon \cite{Yeadon1975Lpspaces} and Haagerup \cite{Haagerup1979Lpspaces} (for a survey on these spaces, see Pisier and Xu \cite{Pisier2003Lpspaces}). Apart from the standard $L^{p}(\mu)$-spaces, common examples are the  $p$-Schatten ideals
$$S_{p}=\{x\in\mathcal{B}(\mathcal{H})\esp\vert\esp {\rm tr}(\vert x\vert^{p})<\infty\esp\}$$ where $\mathcal{H}$ is a separable Hilbert space.\\
We review below (in Section 2) Haagerup's definition of these non-commutative $L_{p}$-spaces. Here is our main result :
\begin{thm}\label{thm1}
Let $G$ be a topological group and $H$ a closed normal subgroup of $G$. Assume that the pair $(G,H)$ has relative property $(T)$. For every von Neumann algebra $\mathcal{M}$, the pair $(G,H)$ has relative property $(T_{L_{p}(\mathcal{M})})$ for $1<p<\infty$.
\end{thm}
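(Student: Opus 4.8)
The strategy is to push the problem onto the Hilbert space $L_{2}(\mathcal{M})$ by means of the non-commutative Mazur map and then invoke the hypothesis. When $p=2$ the space $L_{2}(\mathcal{M})$ is a Hilbert space and the conclusion is immediate from relative property $(T)$, so assume $p\neq 2$. Fix an orthogonal representation $\rho\colon G\to O(L_{p}(\mathcal{M}))$ and suppose, for contradiction, that the quotient representation $\rho'$ on $L_{p}(\mathcal{M})/L_{p}(\mathcal{M})^{\rho(H)}$ almost has invariant vectors.

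\textbf{Linearisation.} Every element of $O(L_{p}(\mathcal{M}))$ is a surjective linear isometry, so the Yeadon-type description of surjective isometries of a (Haagerup) non-commutative $L_{p}$-space — each built from a normal Jordan $*$-isomorphism of $\mathcal{M}$, a partial isometry, and a density operator — applies. The point is that conjugation by the non-commutative Mazur map $M_{p,2}(x)=u|x|^{p/2}$ (for the polar decomposition $x=u|x|$), with inverse $M_{2,p}$, sends such an isometry $U$ to a linear isometry $\Phi(U):=M_{p,2}\circ U\circ M_{2,p}$ of $L_{2}(\mathcal{M})$, viewed if necessary as a real Hilbert space to absorb the $*$-anti-automorphism alternative. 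Since $M_{2,p}\circ M_{p,2}$ is the identity on the unit sphere, $\Phi$ is a group homomorphism, so $\sigma:=\Phi\circ\rho$ is an orthogonal representation of $G$ on a Hilbert space, and $M_{p,2}$ is $G$-equivariant on the unit sphere: $M_{p,2}\circ\rho(g)=\sigma(g)\circ M_{p,2}$. In particular $M_{p,2}$ carries the unit sphere of $L_{p}(\mathcal{M})^{\rho(H)}$ bijectively onto that of $L_{2}(\mathcal{M})^{\sigma(H)}$.

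\textbf{Extracting a genuine almost invariant vector, then transferring it.} Because $L_{p}(\mathcal{M})$ is reflexive for $1<p<\infty$, the mean ergodic theorem gives a topological direct sum $L_{p}(\mathcal{M})=L_{p}(\mathcal{M})^{\rho(H)}\oplus W$ into $G$-invariant subspaces (here $H$ normal is used), with $W=\overline{\mathrm{span}}\{x-\rho(h)x\}$; let $K$ bound the norm of the projection onto $W$. Unwinding the hypothesis on $\rho'$, and using that this projection is $G$-equivariant — so that $\rho(g)\eta-\eta\in W$ for $\eta\in W$ and its norm is controlled by $\mathrm{dist}(\rho(g)\eta-\eta,\,L_{p}(\mathcal{M})^{\rho(H)})$, which is exactly the quotient norm supplied by $\rho'$ — one obtains, for every compact $Q\subset G$ and every $\epsilon>0$, a unit vector $\eta\in W$ with $\mathrm{dist}(\eta,L_{p}(\mathcal{M})^{\rho(H)})\ge 1/K$ and $\sup_{g\in Q}\|\rho(g)\eta-\eta\|<\epsilon$. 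Now $M_{p,2}$ preserves the unit sphere and, together with $M_{2,p}$, is uniformly continuous on it with a modulus $\omega$ depending only on $p$; hence $M_{p,2}(\eta)$ is a unit vector with $\sup_{g\in Q}\|\sigma(g)M_{p,2}(\eta)-M_{p,2}(\eta)\|\le\omega(\epsilon)$, and moreover $\mathrm{dist}(M_{p,2}(\eta),L_{2}(\mathcal{M})^{\sigma(H)})\ge c'$ for a positive $c'$ depending only on $1/K$ — for if this distance were tiny, applying $M_{2,p}$ to a nearby unit vector of $L_{2}(\mathcal{M})^{\sigma(H)}$ would force $\eta$ close to $L_{p}(\mathcal{M})^{\rho(H)}$, by uniform continuity of $M_{2,p}$ and the bijection of the previous step.

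\textbf{Contradiction.} Thus the image of $M_{p,2}(\eta)$ in $L_{2}(\mathcal{M})/L_{2}(\mathcal{M})^{\sigma(H)}$ has norm $\ge c'$ and is displaced by less than $\omega(\epsilon)$ under $\sigma'$, so after normalisation it is a $(Q,\omega(\epsilon)/c')$-invariant vector for $\sigma'$. Since $\omega(\epsilon)\to 0$ and $c'$ is independent of $Q$ and $\epsilon$, the quotient representation $\sigma'$ attached to the Hilbert-space representation $\sigma$ almost has invariant vectors. But the orthogonal complement $(L_{2}(\mathcal{M})^{\sigma(H)})^{\perp}$ is $G$-invariant and has no non-zero $\sigma(H)$-fixed vectors, so relative property $(T)$ of $(G,H)$ forbids almost invariant vectors there, i.e.\ $\sigma'$ has none — a contradiction. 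Hence $\rho'$ does not almost have invariant vectors, which is the assertion. The decisive step is the linearisation: it rests on the classification of surjective isometries of Haagerup $L_{p}$-spaces for arbitrary von Neumann algebras together with its compatibility with the Mazur map (so that conjugation by $M_{p,2}$ returns linear isometries of $L_{2}(\mathcal{M})$), and on a clean treatment of the $*$-anti-automorphism possibility; the remaining inputs — uniform continuity of $M_{p,2}$ and $M_{2,p}$ on unit spheres, reflexivity, the ergodic decomposition, and the norm bookkeeping — are routine once that is in place.
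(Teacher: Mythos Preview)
Your argument is correct and follows the same route as the paper: transfer the $L_{p}$-representation to $L_{2}(\mathcal{M})$ by conjugating with the Mazur map, appeal to Sherman's classification of surjective isometries to get linearity of the conjugated operators, use uniform continuity of $M_{p,2}$ and $M_{2,p}$ on unit spheres to push almost-invariant vectors across, and invoke relative property $(T)$ on the Hilbert side. The only cosmetic differences are that the paper obtains a universal lower bound $d(v,L_{p}(\mathcal{M})^{\rho(H)})\ge\tfrac12$ via the duality map (rather than your projection-norm bound $1/K$), and it proves the conjugated isometries are genuinely \emph{complex}-linear through a Jordan decomposition argument, so your ``real Hilbert space'' hedge is unnecessary.
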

In particular, if $G$ has property $(T)$, then $G$ has property $(T_{L_{p}(\mathcal{M})})$ for \\$1<p<\infty$. Property $(T_{B})$ has a stronger version which is a fixed point property for affine actions.
\begin{df}
\rm{}Let $B$ a Banach space. A topological group $G$ has property $(F_{B})$ if every continuous action of $G$ by affine isometries on $B$ has a $G$-fixed point.
\end{df}
The authors of \cite{bader2007propertyTLp} showed that higher rank groups and their lattices have property $(F_{L^{p}(\mu)})$. 
\begin{df}
\rm{}For $1\leq i\leq m$, let $k_{i}$ be local fields and $\mathbb{G}_{i}(k_{i})$ be the $k_{i}$-points of connected simple $k_{i}$-algebraic groups $\mathbb{G}_{i}$. Assume that each simple factor $\mathbb{G}_{i}$ has $k_{i}$-rank $\geq2$. The group $G=\Pi_{i=1}^{m}\mathbb{G}_{i}(k_{i})$ is called a higher rank group.
\end{df}
Our next result shows that Theorem B in \cite{bader2007propertyTLp} remains true for non-commutative $L_{p}$-spaces. 
\begin{thm}\label{thm2}
Let $G$ be a higher rank group and $\mathcal{M}$ a von Neumann algebra. Then $G$, as well as every lattice in $G$, has property $F_{L_{p}(\mathcal{M})}$ for $1<p<\infty$.
\end{thm}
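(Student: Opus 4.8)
The plan is to follow closely the proof of Theorem~B in \cite{bader2007propertyTLp}, replacing the commutative $L^p$-spaces there by the Haagerup spaces $B=L_p(\mathcal{M})$ and feeding in Theorem~\ref{thm1} in place of the main result of that paper. Two features of $B$ are used, both available for $1<p<\infty$: the space $L_p(\mathcal{M})$ is uniformly convex --- with a modulus of convexity of the same order as for the commutative $L^p$-spaces, see \cite{Pisier2003Lpspaces} --- hence superreflexive; and, by Yeadon's description of the isometries of non-commutative $L_p$-spaces, an orthogonal representation $\rho\colon G\to O(L_p(\mathcal{M}))$ corresponds to a unitary representation of $G$ on $L_2(\mathcal{M})$, the non-commutative Mazur map $L_p(\mathcal{M})\to L_2(\mathcal{M})$ being a uniform homeomorphism that intertwines the two. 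These are exactly the ingredients behind Theorem~\ref{thm1}.

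There are two routine reductions. First, if $\Gamma$ is a lattice in $G$, any continuous affine isometric action of $\Gamma$ on $L_p(\mathcal{M})$ induces such an action of $G$ on $L_p\big(L^{\infty}(G/\Gamma)\,\overline{\otimes}\,\mathcal{M}\big)$, again the Haagerup $L_p$-space of a von Neumann algebra because $G/\Gamma$ carries a finite invariant measure and $p>1$; a $G$-fixed point of the induced action produces a $\Gamma$-fixed point, so it suffices to prove that $G$ itself has property $F_{L_p(\mathcal{M})}$. Second, if $G=G_1\times G_2$ with each $G_i$ having this property, then for an affine isometric action of $G$ the fixed-point set of $G_1$ is a non-empty closed affine subset, invariant under $G_2$, on which $G_2$ acts by affine isometries; being a closed affine subspace of a uniformly convex space it is again uniformly convex, so $G_2$ fixes a point of it, and this point is fixed by all of $G$. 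Hence it is enough to treat $G=\mathbb{G}(k)$ simple with $\mathrm{rank}_k(\mathbb{G})\geq 2$.

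Let then $\alpha$ be a continuous affine isometric action of such a $G$ on $B=L_p(\mathcal{M})$, with linear part $\rho$ and $1$-cocycle $b$. Since $B$ is superreflexive there is a $\rho(G)$-invariant topological direct sum $B=B^{\rho(G)}\oplus B_0$, with $B_0$ the closure of $\mathrm{span}\{\rho(g)v-v\}$ (the mean ergodic decomposition, valid in uniformly convex spaces as in \cite{bader2007propertyTLp}); writing $b=b_{\mathrm{inv}}+b_0$ accordingly, $b_{\mathrm{inv}}\colon G\to B^{\rho(G)}$ is a continuous homomorphism into a Banach space and so vanishes (a higher rank group admits no non-zero such homomorphism, being generated by unipotent subgroups normalised by scaling tori). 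Thus we may assume $\rho$ has no non-zero invariant vectors, and then, by Theorem~\ref{thm1} applied to $(G,G)$, $\rho$ has no almost invariant vectors, with an explicit Kazhdan pair witnessing this uniformly on the unit sphere. It remains to show that $b$ is bounded, for then the circumcentre of the closed convex hull of an $\alpha(G)$-orbit is an $\alpha(G)$-fixed point by uniform convexity. This is where $\mathrm{rank}_k\geq 2$ is used, exactly as in \cite{bader2007propertyTLp}: $G$ is boundedly generated by unipotent subgroups, $G=V_1\cdots V_r$, and each $V_i$ is contained in a subgroup $H_i\cong\mathrm{SL}_2(k)\ltimes k^{2}$ of $G$ for which the pair $(H_i,k^{2})$ has relative property $(T)$; since $\|b(v_1\cdots v_r)\|\leq\sum_i\sup_{V_i}\|b\|$, it suffices to bound $b$ on each $V_i$, and for this it suffices that the restricted action $\alpha|_{H_i}$ fix a point of $k^{2}$. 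The latter is deduced from the relative property $(T_{L_p(\mathcal{M})})$ of $(H_i,k^{2})$ provided by Theorem~\ref{thm1}: the homomorphism part of $b|_{k^{2}}$ into $B^{\rho(k^{2})}$ is killed by the scaling action of $\mathrm{SL}_2(k)$ on $k^{2}$ (or, over non-archimedean $k$, directly by compactness), while the remaining $k^{2}$-cocycle, with values in $B/B^{\rho(k^{2})}$ and with no $k^{2}$-invariant vectors, is shown to be bounded.

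The boundedness of this last cocycle is the main obstacle: it is the $L_p(\mathcal{M})$-analogue of the implication ``relative property $(T)$ of $(\mathrm{SL}_2(k)\ltimes k^{2},k^{2})$ implies relative property $(FH)$'', and in \cite{bader2007propertyTLp} its commutative $L^p$-version is obtained by combining the amenability of $k^{2}$, the uniform convexity of the coefficient space, and the Mazur map linking the isometry groups of $L^p$ and $L^2$, so as to reduce the statement to the known Hilbert-space one. The point is that all three ingredients persist for $B=L_p(\mathcal{M})$ --- uniform convexity is classical, and the non-commutative Mazur map together with the classification of isometries of $L_p(\mathcal{M})$ is precisely the apparatus already in place for Theorem~\ref{thm1} --- so that the argument of \cite{bader2007propertyTLp} transfers; everything else (the two reductions, the bounded-generation and root-$\mathrm{SL}_2$ structure of higher rank groups, the mean ergodic decomposition in uniformly convex spaces) is either routine or purely group-theoretic and requires no change.
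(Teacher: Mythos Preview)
Your overall strategy---feed the non-commutative property $(T_{L_p(\mathcal{M})})$ into the BFGM machinery for $F_B$---is exactly what the paper does, and your reductions (induction for lattices, product decomposition, mean ergodic splitting) match the paper's treatment. Two points deserve correction, though.

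First, the paper does \emph{not} invoke Theorem~\ref{thm1} at the crucial step; it formulates and proves a variant, Theorem~5.2: if $(L\ltimes H,H)$ has \emph{strong} relative property~$(T)$---meaning the $L$-representation on $B/B^{\rho(H)}$ has no almost $\rho(L)$-invariant vectors, rather than no almost $\rho(L\ltimes H)$-invariant ones---then it has strong relative property~$(T_{L_p(\mathcal{M})})$. This is the statement BFGM's black box actually needs. Your appeal to Theorem~\ref{thm1} for $(H_i,k^2)$ gives only the weaker conclusion, and your final paragraph essentially concedes that getting from there to ``the $k^2$-cocycle is bounded'' is where the real work lies. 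The paper's answer is simply: prove the strong version (same Mazur-map argument, mutatis mutandis) and then quote BFGM verbatim.

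Second, the inside of the BFGM black box is not the bounded-generation argument you outline. The paper explicitly cites ``an analogue of Howe--Moore's theorem on vanishing of matrix coefficients'' as BFGM's mechanism; bounded generation $G=V_1\cdots V_r$ by unipotents, with each $V_i$ sitting inside a copy of $SL_2(k)\ltimes k^2$, is not how they proceed, and in any case that generation statement would itself require justification. The paper avoids all of this by citing BFGM wholesale once the strong relative $(T_{L_p(\mathcal{M})})$ ingredient is supplied. A minor aside: for the isometry classification you should cite Sherman \cite{sherman2005isometries}, not Yeadon---Yeadon covers only the semifinite case.
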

Theorem 1.6 was proved by Puschnigg in \cite{puschnigg2008finitely} in the case $L_{p}(\mathcal{M})=S_{p}$. The strategy of the proof of Theorem \ref{thm1} (as in \cite{puschnigg2008finitely}) follows the one from \cite{bader2007propertyTLp}. To achieve the result, we will need some results on the Mazur map and the description of the surjective isometries of $L_{p}(\mathcal{M})$ given by Sherman in \cite{sherman2005isometries}.\\
The paper is organized as follows. In Section 2, useful properties of the Mazur map are established. Group representations on $L_{p}(\mathcal{M})$ are studied in Section 3. The proof of Theorem \ref{thm1} is given in Section 4. In Section 5, we show how Theorem \ref{thm2} can be obtained from a variant of  Theorem \ref{thm1}.

\section{\sc{Some properties of the Mazur map}}Let $\mathcal{M}$ be a von Neumann algebra, acting on a Hilbert space $\mathcal{H}$, and equipped with a normal semi-finite weight $\varphi_{0}$. Let $t\mapsto\sigma^{\varphi_{0}}_{t}$ be the one-parameter group of modular automorphisms of $\mathcal{M}$ with respect to $\varphi_{0}$. We denote by $\mathcal{N}_{\varphi_{0}}=\mathcal{M}\rtimes_{\varphi_{0}}\mathbb{R}$ the crossed product von Neumann algebra, which is a von Neumann algebra acting on $L^{2}(\mathbb{R},\mathcal{H})$, and  generated by the operators $\pi_{\varphi_{0}}(x)$, $x\in\mathcal{M}$, and $\lambda_{s}$, $s\in\mathbb{R}$, defined by
\begin{displaymath}
\begin{split}
&\pi_{\varphi_{0}}(x)(\xi)(t)=\sigma^{\varphi_{0}}_{-t}(x)\xi(t)\\
&\lambda_{s}(\xi)(t)=\xi(t-s)\esp\esp\esp\esp\esp\esp\esp\textrm{for any }\xi\in L^{2}(\mathbb{R},\mathcal{H})\textrm{ and }t\in\mathbb{R}.
\end{split}
\end{displaymath} 
There is a dual action $s\mapsto\theta_{s}$ of $\mathbb{R}$ on $\mathcal{N}_{\varphi_{0}}$. Then let $\tau_{\varphi_{0}}$ be the semi-finite normal trace on $\mathcal{N}_{\varphi_{0}}$ satisfying
$$\tau_{\varphi_{0}}\circ\theta_{s}={\rm e}^{-s}\tau_{\varphi_{0}}\textrm{for all }s\in\mathbb{R}.$$
We denote by $\ L_{0}(\mathcal{N}_{\varphi_{0}},\tau_{\varphi_{0}})$ the *-algebra of $\tau_{\varphi_{0}}$-measurable operators affiliated with $\mathcal{N}_{\varphi_{0}}$. For $1\leq p\leq\infty$, the Haagerup non-commutative $L_{p}$-space associated with $\mathcal{M}$ is defined by
$$L_{p}(\mathcal{M})=\{\esp x\in L_{0}(\mathcal{N}_{\varphi_{0}},\tau_{\varphi_{0}})\esp\vert\esp\theta_{s}(x)={\rm e}^{-s/p}x\textrm{ for all }s\in\mathbb{R}\}.$$
It is known that this space is independant of the weight $\varphi_{0}$ up to isomorphism. The space $L_{1}(\mathcal{M})$ is isomorphic to $\mathcal{M}_{*}$. The identification goes as follows : there exists a normal faithful semi-finite operator valued weight from $\mathcal{N}_{\varphi_{0}}$ to $\mathcal{M}$ defined by
$$\Phi_{\varphi_{0}}(x)=\pi_{\varphi_{0}}^{-1}(\int_{\mathbb{R}}\theta_{s}(x){\rm d}s)\esp,\textrm{ for }x\in\mathcal{N}_{\varphi_{0}}.$$
Now, if $\varphi\in\mathcal{M}_{*}^{+}$, and $\hat{\varphi}$ denotes the extension of $\varphi$ to a a normal weight on $\Hat{\mathcal{M}}^{+}$, the extended positive part of $\mathcal{M}$, we then put
$$\tilde{\varphi}^{\varphi_{0}}=\hat{\varphi}\circ\Phi_{\varphi_{0}}.$$
We associate to $\varphi$ the Radon-Nikodym derivative $\frac{d\tilde{\varphi}^{\varphi_{0}}}{d\tau_{\varphi_{0}}}$ of $\tilde{\varphi}^{\varphi_{0}}$ with respect to the trace $\tau_{\varphi_{0}}$. This isomorphism between $\mathcal{M}_{*}^{+}$ and $L_{1}(\mathcal{M})^{+}$ extends to the whole spaces by linearity.\\
If $x\in L_{1}(\mathcal{M})$, and $\varphi_{x}$ is the element of  $\mathcal{M}_{*}^{+}$ associated to $x$, we define a linear functional Tr by
$${\rm Tr}(x)=\varphi_{x}(1)$$
and we have, $p'$ being the conjugate exponent of $p$,
$${\rm Tr}(xy)={\rm Tr}(yx)\textrm{ for }x\in L_{p}(\mathcal{M}),\esp y\in L_{p'}(\mathcal{M})$$
For $1\leq p<\infty$, if $x=u\vert x\vert$ is the polar decomposition of $x\in L_{p}(\mathcal{M})$, we define 
$$\vert\vert x\vert\vert_{p}={\rm Tr}(\vert x\vert^{p})^{1/p}.$$
Equipped with $\vert\vert . \vert\vert_{p}$, $L_{p}(\mathcal{M})$ is a Banach space. For $1<p<\infty$, the dual space of $L_{p}(\mathcal{M})$ is $L_{p'}(\mathcal{M})$ and $L_{p}(\mathcal{M},\tau)$ is known to be superreflexive.\\

We now introduce the Mazur map and establish some of its properties. 
\begin{df}
\rm{}Let $1\leq p,q<\infty$. For an operator $a$, let $\alpha\vert a\vert$ be its polar decomposition. The map
\begin{displaymath}
\begin{split}
M_{p,q}:&L_{0}(\mathcal{N}_{\varphi_{0}},\tau_{\varphi_{0}})\rightarrow L_{0}(\mathcal{N}_{\varphi_{0}},\tau_{\varphi_{0}})\\
               &x=\alpha\vert a\vert\mapsto\alpha\vert a\vert^{\frac{p}{q}}
\end{split}
\end{displaymath}
is called the Mazur map.
\end{df}

We will need the following lemma.
\begin{lem}\label{lem3.4}
Let $1\leq p,q,r<\infty$. Then $M_{r,q}\circ M_{p,r}=M_{p,q}$.
\end{lem}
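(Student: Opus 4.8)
The plan is to compute both sides starting from the polar decomposition of an arbitrary $x \in L_{0}(\mathcal{N}_{\varphi_{0}},\tau_{\varphi_{0}})$ and to reduce everything to the elementary identity $(t^{p/r})^{r/q} = t^{p/q}$ for $t \geq 0$, applied through the Borel functional calculus of the positive $\tau_{\varphi_{0}}$-measurable operator $|x|$.

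First I would fix $x$ with polar decomposition $x = \alpha|x|$, where $\alpha$ is a partial isometry whose initial projection $\alpha^{*}\alpha$ equals the support projection $e$ of $|x|$ (equivalently, the projection onto $\overline{\mathrm{ran}\,|x|}$). By definition, $M_{p,r}(x) = \alpha|x|^{p/r}$, where $|x|^{p/r}$ is obtained from $|x|$ by functional calculus with the function $t \mapsto t^{p/r}$, again a positive $\tau_{\varphi_{0}}$-measurable operator.

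The key step is to identify the polar decomposition of $y := \alpha|x|^{p/r}$. Since $p/r > 0$, the function $t \mapsto t^{p/r}$ vanishes only at $0$, so $|x|^{p/r}$ has the same support projection $e$ as $|x|$; in particular $e|x|^{p/r} = |x|^{p/r} = |x|^{p/r}e$. Hence $y^{*}y = |x|^{p/r}\alpha^{*}\alpha|x|^{p/r} = |x|^{p/r}e|x|^{p/r} = |x|^{2p/r}$, so $|y| = (y^{*}y)^{1/2} = |x|^{p/r}$, and $\alpha$ is a partial isometry with initial projection $e$, which is exactly the support projection of $|y|$. By uniqueness of the polar decomposition in $L_{0}(\mathcal{N}_{\varphi_{0}},\tau_{\varphi_{0}})$, this shows that $y = \alpha|y|$ is the polar decomposition of $y$. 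Applying the definition of $M_{r,q}$ to $y$ written in this form gives $M_{r,q}(y) = \alpha|y|^{r/q} = \alpha(|x|^{p/r})^{r/q}$, and the scalar identity $(t^{p/r})^{r/q} = t^{p/q}$ on $[0,\infty)$ — together with the compatibility of iterated functional calculus of a single positive operator — yields $(|x|^{p/r})^{r/q} = |x|^{p/q}$. Therefore $M_{r,q}(M_{p,r}(x)) = \alpha|x|^{p/q} = M_{p,q}(x)$.

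I expect the only genuinely delicate point to be the bookkeeping around support projections and the uniqueness of the polar decomposition for (possibly unbounded) $\tau_{\varphi_{0}}$-measurable operators: precisely the assertion that the partial isometry occurring in $M_{p,r}(x)$ is already the correct partial isometry for the polar decomposition of the intermediate operator, so that no extra partial isometry is introduced when $M_{r,q}$ is applied afterwards. Once this is settled, the remainder is a one-line application of $(t^{p/r})^{r/q} = t^{p/q}$.
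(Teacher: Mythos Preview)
Your proof is correct and follows essentially the same approach as the paper: both identify the polar decomposition of the intermediate operator $\alpha|x|^{p/r}$ as $\alpha$ times $|x|^{p/r}$ by arguing that $|x|$ and $|x|^{p/r}$ share the same support (the paper phrases this as $\mathrm{Ker}(|x|)=\mathrm{Ker}(|x|^{\beta})$ via spectral measures, you via functional calculus and the observation that $t\mapsto t^{p/r}$ vanishes only at $0$), after which the result reduces to $(t^{p/r})^{r/q}=t^{p/q}$.
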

\begin{proof}
Let $\alpha\vert x\vert$ be the polar decomposition of $x\in L_{0}(\mathcal{N}_{\varphi_{0}},\tau_{\varphi_{0}})$. Let $\beta>0$, and set $y=\alpha\vert x\vert^{\beta}$. We claim that the polar decomposition of $y$ is given by $\alpha$ and $\vert x\vert^{\beta}$. To show this, it suffices to prove that $\overline{ {\rm Im}(\vert x\vert^{\beta})}=\overline{ {\rm Im}(\vert x\vert)}$.\\
By taking orthogonals, we have to show that ${\rm Ker}(\vert x\vert)={\rm Ker}(\vert x\vert^{\beta})$ for all $\beta>0$. Recall that the domain $D(\vert x\vert^{\beta})$ of $\vert x\vert^{\beta}$ is
$$D(\vert x\vert^{\beta})=\{\xi\esp\vert\esp\int_{0}^{\infty}\lambda^{2\beta}d\mu_{\xi}(\lambda)<\infty\}.$$
If $\xi\in{\rm Ker}  (\vert x\vert)$, we have for all $\eta\in L^{2}(\mathbb{R},\mathcal{H})$
$$<\vert x\vert\xi,\eta>=\int_{0}^{\infty}\lambda d\mu_{\xi,\eta}(\lambda)=0.$$
In particular, $\mu_{\xi}(]0,\infty[)=0$. So $\xi\in D(\vert x\vert^{\beta})$ and $\xi\in {\rm Ker}(\vert x\vert^{\beta})$ thanks to
$$<\vert x\vert^{\beta}\xi,\eta>=\int_{0}^{\infty}\lambda^{\beta} d\mu_{\xi,\eta}(\lambda)=0.$$
By exchanging the role of $\vert x\vert$ and $\vert x\vert^{\beta}$, we get the equality.  \\
Let $1\leq p,q,r<\infty$, and $\beta=p/r$; then $M_{p,r}(x)=\alpha\vert x\vert^{\beta}$. It follows from what we have just seen that $M_{r,q}(M_{p,r}(x))=\alpha\vert x\vert^{\frac{p}{q}}=M_{p,q}(x)$.
\end{proof}

\begin{prp}\label{prp3.5}
Let $1\leq p,q<\infty$, and $a\in L_{p}(\mathcal{M})$. Then
\begin{displaymath}
\vert\vert M_{p,q}(a)\vert\vert_{q}^{q}=\vert\vert a\vert\vert_{p}^{p}.
\end{displaymath}
\end{prp}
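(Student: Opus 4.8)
The plan is to reduce the identity to the polar-decomposition computation already carried out inside the proof of Lemma~\ref{lem3.4}, combined with the definitions of the norms $\vert\vert\cdot\vert\vert_{p}$ and $\vert\vert\cdot\vert\vert_{q}$ via the functional ${\rm Tr}$. Write $a=\alpha\vert a\vert$ for the polar decomposition, so that by definition $M_{p,q}(a)=\alpha\vert a\vert^{p/q}$.

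First I would check that $M_{p,q}$ actually sends $L_{p}(\mathcal{M})$ into $L_{q}(\mathcal{M})$, so that the left-hand side is meaningful. Since $\theta_{s}$ is a $*$-automorphism of $\mathcal{N}_{\varphi_{0}}$, it commutes with the functional calculus of a positive $\tau_{\varphi_{0}}$-measurable operator; from $\theta_{s}(a)={\rm e}^{-s/p}a$ one gets $\theta_{s}(\vert a\vert^{2})=\theta_{s}(a^{*}a)={\rm e}^{-2s/p}\vert a\vert^{2}$, hence $\theta_{s}(\vert a\vert)={\rm e}^{-s/p}\vert a\vert$ and therefore $\theta_{s}(\vert a\vert^{p/q})={\rm e}^{-s/q}\vert a\vert^{p/q}$. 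Comparing $\theta_{s}(\alpha)\theta_{s}(\vert a\vert)=\theta_{s}(a)={\rm e}^{-s/p}\alpha\vert a\vert$ and using uniqueness of the polar decomposition (together with $\theta_{s}$ fixing the support projection of $\vert a\vert$, as $\theta_{s}(\vert a\vert)$ is a positive scalar multiple of $\vert a\vert$) gives $\theta_{s}(\alpha)=\alpha$. Thus $\theta_{s}(M_{p,q}(a))={\rm e}^{-s/q}M_{p,q}(a)$, i.e.\ $M_{p,q}(a)\in L_{q}(\mathcal{M})$.

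Next, by the claim established in the proof of Lemma~\ref{lem3.4} (applied with $\beta=p/q$ and $x=a$), the polar decomposition of $\alpha\vert a\vert^{p/q}$ is exactly $\alpha$ and $\vert a\vert^{p/q}$; in particular $\vert M_{p,q}(a)\vert=\vert a\vert^{p/q}$. Composing functional calculi of the single positive operator $\vert a\vert$, one has $\vert M_{p,q}(a)\vert^{q}=(\vert a\vert^{p/q})^{q}=\vert a\vert^{p}$. Since $a\in L_{p}(\mathcal{M})$ means $\vert a\vert^{p}\in L_{1}(\mathcal{M})$ with ${\rm Tr}(\vert a\vert^{p})=\vert\vert a\vert\vert_{p}^{p}<\infty$, we conclude
$$\vert\vert M_{p,q}(a)\vert\vert_{q}^{q}={\rm Tr}(\vert M_{p,q}(a)\vert^{q})={\rm Tr}(\vert a\vert^{p})=\vert\vert a\vert\vert_{p}^{p}.$$

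The computation itself is short; the main point to get right is the bookkeeping with functional calculus for unbounded $\tau_{\varphi_{0}}$-measurable operators — namely that $\theta_{s}$ commutes with $t\mapsto t^{\beta}$ on positives, that $(\vert a\vert^{p/q})^{q}=\vert a\vert^{p}$, and that these operations remain inside $L_{0}(\mathcal{N}_{\varphi_{0}},\tau_{\varphi_{0}})$ — all of which follow from the fact that the functional calculus of a fixed positive self-adjoint measurable operator is a $*$-homomorphism on Borel functions, so the composition rule $g\circ f(\vert a\vert)=g(f(\vert a\vert))$ applies. I expect this routine verification, rather than any genuine difficulty, to be the only thing requiring attention.
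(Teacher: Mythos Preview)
Your argument is correct and follows essentially the same route as the paper: identify $\vert M_{p,q}(a)\vert=\vert a\vert^{p/q}$ via the polar-decomposition claim from Lemma~\ref{lem3.4}, then conclude ${\rm Tr}(\vert M_{p,q}(a)\vert^{q})={\rm Tr}(\vert a\vert^{p})$. The only difference is that you include the verification that $M_{p,q}(a)\in L_{q}(\mathcal{M})$, which the paper postpones to the proof of Proposition~\ref{prp3}.
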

\begin{proof}
We denote again by $\alpha \vert a\vert$ the polar decomposition of $a$. We have already seen that $\vert M_{p,q}(a)\vert=\vert a\vert^{\frac{p}{q}}$. So we have 
\begin{displaymath}
{\rm Tr}(\vert M_{p,q}(a)\vert^{q})={\rm Tr}(\vert a\vert^{p}).
\end{displaymath} 
\end{proof}

\begin{prp}\label{prp3}
Let $p,q\in]1,\infty[$ be conjugate. The map
\begin{displaymath}
\begin{split}
L_{p}(\mathcal{M})&\rightarrow L_{q}(\mathcal{M})\\
x&\mapsto M_{p,q}(x)^{*}
\end{split}
\end{displaymath}
is the duality map from $L_{p}(\mathcal{M})$ to $L_{q}(\mathcal{M})$. 
\end{prp}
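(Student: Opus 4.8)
The plan is to verify that the map $J : x \mapsto M_{p,q}(x)^*$ satisfies the two defining properties of the duality map: for every $x \in L_p(\mathcal{M})$ one has $\operatorname{Tr}(x \cdot J(x)) = \|x\|_p^2$ (equivalently $\|x\|_p^p$ after suitable normalization — I will be careful about which normalization convention the statement intends, likely the one giving $\langle x, J(x)\rangle = \|x\|_p^{?}$; I will use the honest pairing and record the exact power) and $\|J(x)\|_q = \|x\|_p^{p-1}$, i.e. $\|J(x)\|_q = \|x\|_p^{p/q}$. The second is essentially immediate from Proposition \ref{prp3.5}: since the $*$-operation is isometric on $L_q(\mathcal{M})$ (it sends $\beta|b|$ to $|b|^{\text{op}}\beta^*$ and preserves $\operatorname{Tr}(|\cdot|^q)$), we get $\|M_{p,q}(x)^*\|_q = \|M_{p,q}(x)\|_q = \|x\|_p^{p/q}$ using $\|M_{p,q}(x)\|_q^q = \|x\|_p^p$.

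The first property is where the real computation lies. Write the polar decomposition $x = \alpha|x|$. By the discussion preceding Proposition \ref{prp3.5}, $M_{p,q}(x) = \alpha|x|^{p/q}$, so $M_{p,q}(x)^* = |x|^{p/q}\alpha^*$. Then I would compute
\begin{displaymath}
x \cdot M_{p,q}(x)^* = \alpha |x|\,|x|^{p/q}\alpha^* = \alpha|x|^{1+p/q}\alpha^* = \alpha|x|^{p}\alpha^*,
\end{displaymath}
using $1 + p/q = p$ (valid precisely because $p,q$ are conjugate). Since $\alpha$ is a partial isometry with $\alpha^*\alpha$ the support projection of $|x|$, we have $\alpha|x|^p\alpha^* \geq 0$ with the same trace as $|x|^p$, hence $\operatorname{Tr}(x\,M_{p,q}(x)^*) = \operatorname{Tr}(|x|^p) = \|x\|_p^p$. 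One must check $x\,M_{p,q}(x)^* \in L_1(\mathcal{M})$ so that $\operatorname{Tr}$ applies: this follows from $x \in L_p(\mathcal{M})$, $M_{p,q}(x)^* \in L_q(\mathcal{M})$ and the Hölder property $L_p \cdot L_{q} \subset L_1$ stated in the excerpt, together with the trace identity $\operatorname{Tr}(ab)=\operatorname{Tr}(ba)$. Combining, $\operatorname{Tr}(x\,J(x)) = \|x\|_p^p = \|x\|_p \cdot \|x\|_p^{p-1} = \|x\|_p\,\|J(x)\|_q$, which is exactly the normalized pairing characterizing the duality map (the duality map of a smooth Banach space at $x$ being the unique $x^* $ with $\|x^*\| = \|x\|$ and $\langle x, x^*\rangle = \|x\|^2$, here rescaled since $L_p(\mathcal{M})$'s canonical pairing with $L_q(\mathcal{M})$ is $(x,y)\mapsto \operatorname{Tr}(xy)$).

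The main obstacle is bookkeeping rather than depth: one must handle the partial-isometry algebra carefully (ensuring $\alpha^*\alpha$ absorbs correctly against $|x|^p$ and that taking adjoints does not disturb supports), confirm that $M_{p,q}(x)^*$ genuinely lies in $L_q(\mathcal{M})$ and not merely in $L_0(\mathcal{N}_{\varphi_0},\tau_{\varphi_0})$ (which needs the observation $\theta_s(M_{p,q}(x)) = \mathrm{e}^{-s/q}M_{p,q}(x)$, a consequence of $\theta_s$ being a $*$-automorphism commuting with functional calculus and polar decomposition), and match the normalization so that the statement "is the duality map" is literally correct. A secondary point is uniqueness: since $L_p(\mathcal{M})$ is smooth (being uniformly convex/superreflexive with smooth dual for $1<p<\infty$), the duality map is single-valued, so exhibiting one functional with the right norm and pairing suffices — I would invoke this standard fact rather than reprove it.
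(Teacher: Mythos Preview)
Your proposal is correct and follows essentially the same route as the paper. The paper likewise first verifies $M_{p,q}(x)\in L_q(\mathcal{M})$ via the scaling $\theta_s(M_{p,q}(x))=\mathrm{e}^{-s/q}M_{p,q}(x)$, then invokes uniqueness of the duality map in superreflexive spaces and reduces to checking the pairing on the unit sphere; the only cosmetic difference is that the paper computes $\operatorname{Tr}(M_{p,q}(a)^{*}a)=\operatorname{Tr}(|a|^{p/q}\alpha^{*}\alpha|a|)=\operatorname{Tr}(|a|^{p})$ directly, whereas you compute $\operatorname{Tr}(a\,M_{p,q}(a)^{*})=\operatorname{Tr}(\alpha|a|^{p}\alpha^{*})$ and then appeal to the trace property---same idea, opposite order.
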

\begin{proof}
We first notice that $M_{p,q}$ sends $L_{p}(\mathcal{M})$ into $L_{q}(\mathcal{M})$. Let $x=\alpha\vert x\vert\in L_{p}(\mathcal{M})$ and $s\in\mathbb{R}$. By uniqueness in the polar decomposition, we have $\theta_{s}(\alpha)=\alpha$ and $\theta_{s}(\vert x\vert)={\rm e}^{-s/p}\vert x\vert$, and then
\begin{displaymath}
\begin{split}
\theta_{s}(M_{p,q}(x))&=\theta_{s}(\alpha)\theta_{s}(\vert x\vert^{\frac{p}{q}})\\
&=\alpha(\theta_{s}(\vert x\vert)^{\frac{p}{q}})\\
&={\rm e}^{-s/q}M_{p,q}(x).
\end{split}
\end{displaymath}
Thanks to the uniqueness of the duality map in superreflexive spaces, we just have to check that ${\rm Tr}(M_{p,q}(a)^{*}a)=1$ for $a$ in the unit sphere $S(L_{p}(\mathcal{M}))$ of $L_{p}(\mathcal{M})$.\\
Let $a=\alpha\vert a\vert\in S(L_{p}(\mathcal{M}))$; then $M_{p,q}(a)=\alpha\vert a\vert^{\frac{p}{q}}$. Since $\alpha^{*}\alpha \vert a\vert=\vert a\vert$, it follows that 
$${\rm Tr}(\vert a\vert^{\frac{p}{q}}\alpha^{*}\alpha\vert a\vert)={\rm Tr}(\vert a\vert^{\frac{p}{q}}\vert a\vert)={\rm Tr}(\vert a\vert^{p})=1.$$
\end{proof}

\begin{prp}\label{prp3.7}
If $a,b\in L_{0}(\mathcal{N}_{\varphi_{0}},\tau_{\varphi_{0}})$ and if $e,f$ are two central projections in $\mathcal{N}_{\varphi_{0}}$ such that $ef=0$, then $M_{p,q}(ae+bf)=M_{p,q}(ae)+M_{p,q}(bf)$.
\end{prp}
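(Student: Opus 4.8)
\emph{Proof plan.} The idea is to use the centrality of $e$ and $f$ to split the polar decomposition of $ae+bf$ into those of its two pieces $x_1:=ae$ and $x_2:=bf$, and then apply $M_{p,q}$ componentwise. Since $e$ and $f$ are central in $\mathcal{N}_{\varphi_0}$, they commute with $a,a^*,b,b^*$, so $x_1=x_1e=ex_1$ and $x_2=x_2f=fx_2$; combined with $ef=0$ this makes $x_1$ affiliated with the reduced algebra $\mathcal{N}_{\varphi_0}e$ and $x_2$ with $\mathcal{N}_{\varphi_0}f$, and forces the mixed products $x_1^*x_2$ and $x_2^*x_1$ to vanish.

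First I would compute the modulus: from the vanishing of the cross terms, $|ae+bf|^2=x_1^*x_1+x_2^*x_2=|x_1|^2+|x_2|^2$, and the two summands are commuting positive $\tau_{\varphi_0}$-measurable operators with product $0$, their supports lying under $e$ and $f$ respectively. Hence $\psi(|x_1|^2+|x_2|^2)=\psi(|x_1|^2)+\psi(|x_2|^2)$ for any Borel $\psi$ vanishing at $0$; taking $\psi(t)=t^{1/2}$ gives $|ae+bf|=|x_1|+|x_2|$, and $\psi(t)=t^{p/(2q)}$ gives $|ae+bf|^{p/q}=|x_1|^{p/q}+|x_2|^{p/q}$. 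Next I would identify the partial isometry: writing $x_i=\alpha_i|x_i|$ for the polar decompositions, one has $\alpha_1=e\alpha_1e$ and $\alpha_2=f\alpha_2f$, so that $\alpha:=\alpha_1+\alpha_2$ satisfies $\alpha(|x_1|+|x_2|)=x_1+x_2$ (the terms $\alpha_1|x_2|$ and $\alpha_2|x_1|$ drop out since $ef=0$) and $\alpha^*\alpha=\alpha_1^*\alpha_1+\alpha_2^*\alpha_2$ is the support projection of $|ae+bf|$. By uniqueness of the polar decomposition in $L_0(\mathcal{N}_{\varphi_0},\tau_{\varphi_0})$, $\alpha$ is the partial isometry of $ae+bf$. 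Combining the two steps, $M_{p,q}(ae+bf)=\alpha\,|ae+bf|^{p/q}=(\alpha_1+\alpha_2)(|x_1|^{p/q}+|x_2|^{p/q})=\alpha_1|x_1|^{p/q}+\alpha_2|x_2|^{p/q}=M_{p,q}(ae)+M_{p,q}(bf)$, the cross terms again vanishing because the relevant supports sit under $e$ and $f$.

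The only point requiring real care — the main obstacle — is the compatibility of the polar decomposition and of the functional calculus defining $|\,\cdot\,|^{p/q}$ with the direct-sum splitting induced by the central projections: this rests on the uniqueness of polar decomposition in the $*$-algebra of $\tau_{\varphi_0}$-measurable operators and on the elementary functional-calculus identity for commuting positive operators with zero product. The rest is bookkeeping with the commutation relations coming from the centrality of $e$ and $f$.
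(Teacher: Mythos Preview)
Your proposal is correct and follows essentially the same route as the paper: compute $|ae+bf|$ by exploiting the vanishing cross terms, identify the partial isometry of $ae+bf$ as the sum of the two pieces, and then read off $M_{p,q}$ componentwise using that $ef=0$ kills the mixed products. The only cosmetic difference is that the paper phrases the polar decomposition of $ae+bf$ in terms of the polar decompositions $a=\alpha|a|$, $b=\beta|b|$ (obtaining $\gamma=\alpha e+\beta f$ and $|ae+bf|=|a|e+|b|f$), whereas you work directly with the polar decompositions of $x_1=ae$ and $x_2=bf$; since $|ae|=|a|e$ and the partial isometry of $ae$ is $\alpha e$ by the same centrality argument, the two presentations coincide.
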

\begin{proof}
As is easily checked, we have $$\vert ae+bf\vert=\vert a\vert e+\vert b\vert f.$$
Let $\gamma$ be the partial isometry occuring in the polar decomposition of $ae+bf$, and let $a=\alpha\vert a\vert$, $b=\beta\vert b\vert$ be the polar decompositions of $a$ and $b$. We claim that $\gamma=\alpha e+\beta f$. Indeed, we have
\begin{displaymath}
\begin{split}
&ae+bf=\gamma\vert ae+bf\vert \\
&\textrm{and }ae+bf=(\alpha e)(\vert a\vert e)+(\beta f)(\vert b\vert f)=(\alpha e+\beta f)\vert ae+bf\vert.
\end{split}
\end{displaymath}
Since $\alpha e$ is zero on ${\rm Ker}(\vert a\vert e)$ and $\beta f$ is zero on ${\rm Ker}(\vert b\vert f)$, $\alpha e+\beta f$ is zero on ${\rm Im}(\vert ae +bf\vert)^{\bot}={\rm Ker}(\vert ae+by\vert)={\rm Ker}(\vert a\vert e)\cap {\rm Ker}(\vert b\vert f)$ ($ef=0$).\\
Using again the fact that $ef=0$ and that $e,f$ are central elements, we deduce that
\begin{displaymath}
\begin{split}
M_{p,q}(ae+bf)&=(\alpha e+\beta f)\vert ae+bf\vert^{\frac{p}{q}}\\
&=(\alpha e+\beta f)(e\vert a\vert^{\frac{p}{q}}+f\vert b\vert^{\frac{p}{q}})\\
&=M_{p,q}(ae)+M_{p,q}(bf).
\end{split}
\end{displaymath}
\end{proof}

\begin{prp}\label{maz}
Let $J$ be a Jordan-isomorphism of $\mathcal{N}_{\varphi_{0}}$, and let $1\leq p,q<\infty$. Then we have
$$J(x)=M_{p,q}\circ J\circ M_{q,p}(x)\textrm{ for all }x\in\mathcal{N}_{\varphi_{0}}.$$
\end{prp}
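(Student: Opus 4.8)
The plan is to use the classical structure theorem for Jordan $*$-isomorphisms of von Neumann algebras to split $J$ into a $*$-isomorphism part and a $*$-anti-isomorphism part, to check the identity separately on each part, and to recombine the two via Proposition \ref{prp3.7}. Concretely, I would first produce orthogonal central projections $e_{1},e_{2}\in\mathcal{N}_{\varphi_{0}}$ with $e_{1}+e_{2}=1$ such that $a\mapsto J(a)J(e_{1})$ is a $*$-homomorphism and $a\mapsto J(a)J(e_{2})$ is a $*$-anti-homomorphism. Since $J$ preserves the center, $z_{i}:=J(e_{i})$ are again orthogonal central projections and $J(ae_{i})=J(a)z_{i}$; and since $e_{i}$ is central, multiplying a polar decomposition by $e_{i}$ yields the polar decomposition of the product, so $M_{q,p}(xe_{i})=M_{q,p}(x)e_{i}$ is supported under $e_{i}$ and $J(M_{q,p}(xe_{i}))$ is supported under $z_{i}$. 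Applying Proposition \ref{prp3.7} to $x=xe_{1}+xe_{2}$ and then to $J(M_{q,p}(xe_{1}))+J(M_{q,p}(xe_{2}))$ will give
\[
M_{p,q}\circ J\circ M_{q,p}(x)=\sum_{i=1,2}M_{p,q}\bigl(J(M_{q,p}(xe_{i}))\bigr),
\]
reducing the statement to the two cases where $J$ itself is a $*$-isomorphism, resp.\ a $*$-anti-isomorphism, of $\mathcal{N}_{\varphi_{0}}$.

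In the $*$-isomorphism case I would write $y=\alpha|y|$ for the polar decomposition, so that $M_{q,p}(y)=\alpha|y|^{q/p}$ by the proof of Lemma \ref{lem3.4}, and use that a $*$-isomorphism is positive, preserves supports and partial isometries, and commutes with continuous functional calculus: then $J(M_{q,p}(y))=J(\alpha)J(|y|)^{q/p}$ is already in polar form, and $M_{p,q}$ of it is $J(\alpha)\bigl(J(|y|)^{q/p}\bigr)^{p/q}=J(\alpha)J(|y|)=J(y)$.

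The $*$-anti-isomorphism case is the one that needs real work. Here $J$ reverses products, so with $h:=J(|y|)\ge0$ and $v:=J(\alpha)$ one gets $J(M_{q,p}(y))=h^{q/p}v$ --- a product of the form (positive)$\cdot$(partial isometry) rather than the other way around. I would first note that $v$ is still a partial isometry (because $J(\alpha\alpha^{*}\alpha)=J(\alpha)J(\alpha)^{*}J(\alpha)$) and that $J$ still commutes with functional calculus on the single positive element $|y|$. The key computation is the polar decomposition of $h^{q/p}v$: using $J(e|y|)=J(|y|)J(e)$ and $J(|y|e)=J(e)J(|y|)$ one checks $vv^{*}=J(\mathrm{supp}\,|y|)$ is exactly the support of $h$, hence of $h^{q/p}$, from which $(v^{*}h^{q/p}v)^{2}=v^{*}h^{2q/p}v$ and $v(v^{*}h^{q/p}v)=h^{q/p}v$, so $v$ is the partial isometry and $v^{*}h^{q/p}v$ the modulus of $h^{q/p}v$. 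Since $b\mapsto v^{*}bv$ is a $*$-isomorphism of the corner $vv^{*}\mathcal{N}_{\varphi_{0}}vv^{*}$ onto $v^{*}v\mathcal{N}_{\varphi_{0}}v^{*}v$ it intertwines the functional calculi, so $(v^{*}h^{q/p}v)^{p/q}=v^{*}hv$ and hence
\[
M_{p,q}\bigl(J(M_{q,p}(y))\bigr)=v(v^{*}h^{q/p}v)^{p/q}=vv^{*}hv=hv=J(|y|)J(\alpha)=J(\alpha|y|)=J(y).
\]
Recombining the two cases via the displayed formula finishes the argument. I expect the anti-isomorphism case --- specifically, seeing that the two sides of the polar decomposition get reshuffled but $M_{p,q}$ still undoes $M_{q,p}$, which is essentially the assertion that $\mathrm{Ad}_{v^{*}}$ intertwines the $t\mapsto t^{q/p}$ and $t\mapsto t^{p/q}$ functional calculi on the relevant corner --- to be the main obstacle; everything else is formal.
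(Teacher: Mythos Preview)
Your proposal is correct and follows essentially the same route as the paper: St{\o}rmer's decomposition $J=J_{1}+J_{2}$ into a $*$-homomorphism and a $*$-anti-homomorphism supported on orthogonal central projections, Proposition~\ref{prp3.7} to split the Mazur maps accordingly, and then functional-calculus bookkeeping on each piece. The only organisational difference is in the anti-homomorphism case: the paper rewrites $\alpha\lvert x\rvert^{q/p}=\bigl(\alpha\lvert x\rvert^{q/p}\alpha^{*}\bigr)\alpha$ \emph{before} applying $J_{2}$, so that $J_{2}$ lands directly in polar form $J_{2}(\alpha)\,J_{2}(\alpha\lvert x\rvert\alpha^{*})^{q/p}$, whereas you apply $J$ first and then compute the polar decomposition of $h^{q/p}v$ via $\mathrm{Ad}_{v^{*}}$; these are the same computation read in opposite directions.
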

\begin{proof}
By Lemma 3.2 in \cite{stormer1965jordanmorphisms}, we have a decomposition $J=J_{1}+J_{2}$ with the following properties : $J_{1}$ is a {*}-homomorphism, $J_{2}$ is a {*}-anti-homomorphism and $J_{1}(x)=J(x)e$, $J_{2}(x)=J(x)f$ for all $x\in\mathcal{M}$, with $e,f$ two orthogonal and central projections such that $e+f=I$.\\
Observe first that, for $a\in\mathcal{N}_{\varphi_{0}}$ with $a\geq0$ and a positive real number $r$, we have $$J_{1}(a^{r})=J_{1}(a)^{r}$$
and the same is true for $J_{2}$.\\
If $\alpha$ is a partial isometry, then $J_{1}(\alpha)$ and $J_{2}(\alpha)$ are partial isometries with initial supports $J_{1}(\alpha^{*}\alpha)$ and $J_{2}(\alpha\alpha^{*})$, and final supports $J_{1}(\alpha\alpha^{*})$) and $J_{2}(\alpha^{*}\alpha)$) respectively.\\
Let $x=\alpha\vert x\vert\in\mathcal{N}_{\varphi_{0}} $. Since the supports of $J_{1}$ and $J_{2}$ are orthogonal, it follows from Proposition \ref{prp3.7} that
\begin{displaymath}
\begin{split}
M_{p,q}\circ J\circ M_{q,p}(x)&=M_{p,q}(J_{1}(M_{q,p}(x))+J_{q}(M_{q,p}(x)))\\
&=M_{p,q}(J_{1}(M_{q,p}(x)))+M_{p,q}(J_{2}(M_{q,p}(x))).
\end{split}
\end{displaymath}
Moreover, we have
\begin{displaymath}
\begin{split}
M_{p,q}(J_{1}(M_{q,p}(x)))&=M_{p,q}(J_{1}(\alpha\vert x\vert^{\frac{2}{p}}))\\
&=M_{p,q}(J_{1}(\alpha)J_{1}(\vert x\vert)^{\frac{2}{p}})\\
&=J_{1}(x)
\end{split}
\end{displaymath}
and
\begin{displaymath}
\begin{split}
M_{p,q}(J_{2}(M_{q,p}(x)))&=M_{p,q}(J_{2}(\alpha\vert x\vert^{\frac{2}{p}}\alpha^{*}\alpha))\\
&=M_{p,q}(J_{2}(\alpha)J_{2}(\alpha\vert x\vert^{\frac{2}{p}}\alpha^{*}))\\
&=M_{p,q}(J_{2}(\alpha)J_{2}((\alpha\vert x\vert\alpha^{*})^{\frac{2}{p}}))\\
&=M_{p,q}(J_{2}(\alpha)J_{2}(\alpha\vert x\vert\alpha^{*})^{\frac{2}{p}})\\
&=J_{2}(x).
\end{split}
\end{displaymath}
\end{proof}

An essential tool for the proof of Theorem \ref{thm1} is the following result about the local uniform continuity of $M_{p,q}$, which is proved in Lemma 3.2 of \cite{raynaud2002mazurmap} (for an independant proof in the case $L_{p}(\mathcal{M},\tau)=S_{p}$, see \cite{puschnigg2008finitely}).

\begin{prp}{\rm \cite{raynaud2002mazurmap}} 
For $1\leq p,q<\infty$, the Mazur map $M_{p,q}$ is uniformly continuous on the unit sphere $S(L_{p}(\mathcal{M}))$.
\end{prp}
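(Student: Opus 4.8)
The plan is to get rid of the partial isometry in the polar decomposition, which is the source of all the trouble (it can jump wildly under small perturbations while the modulus is small), by passing to self-adjoint elements, where the "isometric part'' becomes $\mathrm{sgn}(\cdot)$ and cannot be detached from the modulus; after that reduction the problem becomes a question about the map $a\mapsto a^{r}$ on positive operators, $r=p/q$, which I would break into factors so that only one genuinely analytic inequality is ever used.

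First I would reduce to self-adjoint elements by the $2\times 2$ trick. For $a\in L_{p}(\mathcal M)$ put $\tilde a=\left(\begin{smallmatrix}0&a\\ a^{*}&0\end{smallmatrix}\right)\in L_{p}(M_{2}(\mathcal M))$; this is self-adjoint with $|\tilde a|=\left(\begin{smallmatrix}|a^{*}|&0\\ 0&|a|\end{smallmatrix}\right)$ and its polar decomposition has isometric part $\left(\begin{smallmatrix}0&\alpha\\ \alpha^{*}&0\end{smallmatrix}\right)$, where $a=\alpha|a|$, so that $M_{p,q}(\tilde a)=\tilde a_{+}^{r}-\tilde a_{-}^{r}$ has $(1,2)$-entry $\alpha|a|^{r}=M_{p,q}(a)$. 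Since compression to the $(1,2)$-corner is contractive on $L_{q}(M_{2}(\mathcal M))$ and $\|\tilde x\|_{p}=2^{1/p}\|x\|_{p}$, it suffices to prove uniform continuity of $x\mapsto x_{+}^{r}-x_{-}^{r}$ on (a fixed multiple of) the self-adjoint unit sphere of $L_{p}(\mathcal N)$ for an arbitrary von Neumann algebra $\mathcal N$. Then, using $x_{\pm}=\tfrac12(|x|\pm x)$ and the boundedness of $x\mapsto|x|$ on $L_{p}(\mathcal N)$ for $1<p<\infty$, one gets $\|x_{\pm}-y_{\pm}\|_{p}\le c_{p}\|x-y\|_{p}$ and $\|x_{\pm}\|_{p}\le\|x\|_{p}$; combined with $\|M_{p,q}(x)-M_{p,q}(y)\|_{q}\le\|x_{+}^{r}-y_{+}^{r}\|_{q}+\|x_{-}^{r}-y_{-}^{r}\|_{q}$, this reduces everything to producing a modulus of continuity for $a\mapsto a^{r}$ from the positive part of the unit ball of $L_{p}(\mathcal N)$ into $L_{q}(\mathcal N)$, $1/q=r/p$.

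For the positive power map I would mimic the scalar argument. If $r\le1$, the bound needed is exactly the power inequality $\|a^{r}-b^{r}\|_{p/r}\le C_{r}\|a-b\|_{p}^{r}$ for $a,b\ge 0$, the non-commutative form of $|s^{r}-t^{r}|\le|s-t|^{r}$, which follows from operator concavity of $t\mapsto t^{r}$. If $r>1$, pick $n$ with $\rho:=r^{1/n}\le 2$ and, by Lemma \ref{lem3.4}, write $a\mapsto a^{r}$ as the $n$-fold composition of $a\mapsto a^{\rho}$, each factor preserving the relevant $L_{s}$-ball by Proposition \ref{prp3.5}. Writing $a^{\rho}=a^{\rho-1}a$ gives $a^{\rho}-b^{\rho}=a^{\rho-1}(a-b)+(a^{\rho-1}-b^{\rho-1})b$, and Hölder's inequality in $L_{s}(\mathcal N)$ together with the case $\rho-1\le1$ yield $\|a^{\rho}-b^{\rho}\|_{s/\rho}\le\|a-b\|_{s}+C\|a-b\|_{s}^{\rho-1}$ on the unit ball; a composition of uniformly continuous maps between these balls is uniformly continuous, which closes the argument.

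The hard part is the power inequality for exponents $\le1$: that is the only place where real non-commutative operator theory is needed, because the pointwise bound $|s^{r}-t^{r}|\le|s-t|^{r}$ does not survive as an operator inequality $|a^{r}-b^{r}|\le|a-b|^{r}$, so one must recover it at the level of the $L_{s/r}$-norm (uniformly in $\mathcal N$) via the integral representation of $t\mapsto t^{r}$ or the Birman–Solomyak-type power inequalities — this is precisely the technical core of Lemma 3.2 in \cite{raynaud2002mazurmap}. The two remaining ingredients, boundedness of $x\mapsto|x|$ on $L_{p}$ for $1<p<\infty$ and the amplification identity $L_{q}(M_{2}(\mathcal N))\cong M_{2}(L_{q}(\mathcal N))$, are standard facts about Haagerup $L_{p}$-spaces.
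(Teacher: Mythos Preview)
The paper does not prove this proposition at all: it is stated with attribution to Raynaud \cite{raynaud2002mazurmap} (Lemma~3.2 there), and the surrounding text merely notes that an independent proof for the Schatten case is in \cite{puschnigg2008finitely}. There is therefore no in-paper argument to compare against; what you have written is an attempt at a self-contained proof where the paper simply quotes the result.

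As a proof sketch, your outline follows the by-now standard route (self-adjoint reduction via the $2\times2$ amplification, then reduction to the power map on positives, then Birman--Solomyak / operator-concavity type estimates for $t\mapsto t^{r}$). Two remarks are in order. First, your reduction from self-adjoint to positive elements invokes Lipschitz continuity of $x\mapsto|x|$ on $L_{p}$, which you correctly restrict to $1<p<\infty$; this means your argument as written does not cover the endpoint $p=1$ in the stated range $1\le p<\infty$ (harmless for the applications in this paper, which only need $1<p<\infty$, but a genuine gap relative to the proposition as stated). Also, calling this Lipschitz bound a ``standard fact'' is generous: it is true, but it is itself a nontrivial operator-theoretic result. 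Second, you explicitly defer the crucial power inequality $\|a^{r}-b^{r}\|_{p/r}\le C_{r}\|a-b\|_{p}^{r}$ for $0<r\le1$ to \cite{raynaud2002mazurmap}, which is exactly the reference the paper already cites for the whole proposition; so your contribution is really the reduction from the Mazur map to this positive-cone estimate, not an independent proof.

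In short: the paper offers no proof, and your sketch is a correct reduction strategy for $1<p<\infty$ that ultimately rests on the same cited source; it does not handle $p=1$.
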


\section{\sc{Group representations on $L_{p}(\mathcal{M})$}}
Sherman's description of the surjective isometries of $L_{p}(\mathcal{M})$ in \cite{sherman2005isometries} is a crucial tool in the following result (non surjective isometries in the semi-finite case, and 2-isometries in the general case are described in \cite{yeadon1980isometries} and \cite{Junge2005isometries} respectively). This will allow us to transfer a representation of a group $G$ on $L_{p}(\mathcal{M})$ to a representation of $G$ on $L_{2}(\mathcal{M})$.
\begin{prp}\label{prp7}
For $p>2$, and $U\in O(L_{p}(\mathcal{M}))$, the map $V=M_{p,2}\circ U\circ M_{2,p}$ belongs to $O(L_{2}(\mathcal{M}))$.
\end{prp}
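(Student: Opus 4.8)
The plan is to verify in turn that $V$ is a norm-preserving bijection of $L_2(\mathcal{M})$ fixing the origin and that $V$ is linear; once both are known, $V\in O(L_2(\mathcal{M}))$. The bijectivity and norm-preservation are soft consequences of the properties of the Mazur map already recorded. Indeed, by Proposition \ref{prp3.5} the map $M_{2,p}$ sends $L_2(\mathcal{M})$ into $L_p(\mathcal{M})$ and $M_{p,2}$ sends $L_p(\mathcal{M})$ into $L_2(\mathcal{M})$, while Lemma \ref{lem3.4} gives $M_{p,2}\circ M_{2,p}=M_{2,2}=\mathrm{id}$ and $M_{2,p}\circ M_{p,2}=M_{p,p}=\mathrm{id}$, so $M_{2,p}\colon L_2(\mathcal{M})\to L_p(\mathcal{M})$ is a bijection; as $U$ is bijective, so is $V$, and plainly $V(0)=0$. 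For the norm, two applications of Proposition \ref{prp3.5} together with $\|Uy\|_p=\|y\|_p$ give, for $x\in L_2(\mathcal{M})$,
$$\|V(x)\|_2^2=\|M_{p,2}(U(M_{2,p}(x)))\|_2^2=\|U(M_{2,p}(x))\|_p^p=\|M_{2,p}(x)\|_p^p=\|x\|_2^2 .$$

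For the linearity, homogeneity is again soft: inspecting the polar decomposition one sees that $M_{r,s}(\lambda a)=\omega\,\rho^{r/s}M_{r,s}(a)$ when $\lambda=\rho\omega$ with $\rho>0$, $|\omega|=1$, and that $M_{r,s}(v_1 a v_2)=v_1 M_{r,s}(a)v_2$ for unitaries $v_1,v_2\in\mathcal{M}$ (regarded via $\pi_{\varphi_0}$ as the fixed points of the dual action inside $\mathcal{N}_{\varphi_0}$); the first identity, combined with the linearity of $U$, yields $V(\lambda x)=M_{p,2}(\omega\rho^{2/p}U(M_{2,p}(x)))=\omega(\rho^{2/p})^{p/2}V(x)=\lambda V(x)$. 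Additivity is the real point, and for it I would invoke Sherman's classification \cite{sherman2005isometries} of the surjective isometries of $L_p(\mathcal{M})$ for $p\neq 2$: every such $U$ can be written $U(y)=w_1\,\widehat{J}(y)\,w_2$, where $w_1,w_2$ are unitaries in $\mathcal{M}$ and $\widehat{J}$ is the extension to $L_0(\mathcal{N}_{\varphi_0},\tau_{\varphi_0})$ of a Jordan $*$-isomorphism of $\mathcal{N}_{\varphi_0}$ which commutes with the dual action and preserves $\operatorname{Tr}$ (so $\widehat{J}$ restricts to an isometry of each $L_p(\mathcal{M})$). Using the second identity above to carry $w_1,w_2$ through the outer Mazur maps, and then the identity $M_{p,2}\circ\widehat{J}\circ M_{2,p}=\widehat{J}$ — which is Proposition \ref{maz} with $q=2$, extended from $\mathcal{N}_{\varphi_0}$ to all $\tau_{\varphi_0}$-measurable operators — one obtains $V(x)=w_1\,\widehat{J}(x)\,w_2$ for all $x\in L_2(\mathcal{M})$, a map which is manifestly additive (being a composition of the additive map $\widehat{J}$ with two-sided multiplication by fixed elements). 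Hence $V$ is linear; together with the norm-preservation and bijectivity established above, $V\in O(L_2(\mathcal{M}))$.

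I expect the main obstacle to lie entirely in this last step. First, one needs Sherman's theorem in precisely the form written above; should the classification instead throw in an extra positive density $b$ (commuting with $\widehat{J}(\mathcal{M})$), that $b$ too passes through the Mazur maps — multiplication by it turns $M_{r,s}(b\,a)$ into $b^{r/s}M_{r,s}(a)$ — but then the exponents and supports must be tracked carefully. Second, and more substantially, Proposition \ref{maz} was proved only for elements of the von Neumann algebra $\mathcal{N}_{\varphi_0}$, whereas it must here be applied to $M_{2,p}(x)$ for an arbitrary $x\in L_2(\mathcal{M})$, i.e.\ to unbounded measurable operators; its proof, however, uses only that a Jordan isomorphism preserves positive powers of positive operators, sends partial isometries to partial isometries with the expected initial and final supports, and Proposition \ref{prp3.7}, all of which survive on $L_0(\mathcal{N}_{\varphi_0},\tau_{\varphi_0})$, so this extension ought to be routine. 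Everything else reduces to Lemma \ref{lem3.4} and Propositions \ref{prp3.5} and \ref{prp3.7}.
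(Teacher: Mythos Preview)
Your norm-preservation and bijectivity arguments are exactly those of the paper, and your overall plan for linearity --- write $U$ in Sherman's form, pass unitaries through the Mazur maps, and apply Proposition~\ref{maz} --- is the right one. The gap is in the form you quote for Sherman's theorem. Sherman does \emph{not} hand you a Jordan $*$-isomorphism $\widehat{J}$ of the crossed product $\mathcal{N}_{\varphi_0}$ commuting with the dual action; what he gives (Theorem~1.2 in \cite{sherman2005isometries}) is a Jordan $*$-isomorphism $J$ of $\mathcal{M}$ and a single unitary $w\in\mathcal{M}$ such that $U(\varphi^{1/p})=w(\varphi\circ J^{-1})^{1/p}$ for $\varphi\in\mathcal{M}_*^+$. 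Getting from a Jordan automorphism of $\mathcal{M}$ to one of $\mathcal{N}_{\varphi_0}$ acting on the Radon--Nikodym derivatives in the right way is precisely the substantive part of the paper's proof: one invokes Watanabe's extension $\widetilde{J}:L_0(\mathcal{N}_{\varphi_0},\tau_{\varphi_0})\to L_0(\mathcal{N}_{\varphi_0\circ J^{-1}},\tau_{\varphi_0\circ J^{-1}})$, proves a Radon--Nikodym lemma (Lemma~\ref{J}), and then composes with another Watanabe isomorphism $\widetilde{\mathcal{K}}$ to come back to $\mathcal{N}_{\varphi_0}$, arriving at $U(x)=w\,(\widetilde{\mathcal{K}}^{-1}\circ\widetilde{J})(x)$ with $\widetilde{\mathcal{K}}^{-1}\circ\widetilde{J}$ a Jordan automorphism of $\mathcal{N}_{\varphi_0}$. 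Your ``$\widehat{J}$'' is this composite, but it must be built, not assumed.

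A second, smaller point: the paper does not extend Proposition~\ref{maz} to unbounded operators directly as you propose. It applies Proposition~\ref{maz} only on $\mathcal{N}_{\varphi_0}$ to obtain $V(x)=w\,(\widetilde{\mathcal{K}}^{-1}\circ\widetilde{J})(x)$ there, and then extends to all of $L_2(\mathcal{M})$ by density of $\mathcal{N}_{\varphi_0}$ in $L_0(\mathcal{N}_{\varphi_0},\tau_{\varphi_0})$ for the measure topology, using that both the Mazur map and $\widetilde{\mathcal{K}}^{-1}\circ\widetilde{J}$ are continuous for that topology. Your proposed direct extension is plausible but would need its own justification; the density route avoids it entirely.
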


\begin{proof}
The fact that $\n V(x)\n_{2}=\n x\n_{2}$ for all $x\in L_{2}(\mathcal{M})$ follows from Proposition \ref{prp3.5}, and $V$ is bijective by Lemma \ref{lem3.4}. We have to prove that $V$ is linear on $L_{2}(\mathcal{M})$. \\
By Theorem 1.2 in \cite{sherman2005isometries}, there exist a Jordan-isomorphism  $J$ of $\mathcal{M}$ and a unitary $w\in\mathcal{M}$ such that
$$U(\varphi^{1/p})=w(\varphi\circ J^{-1})^{1/p}\textrm{ for all }\varphi\in\mathcal{M}_{*}^{+}.$$ 
It was shown in \cite{watanabe1996prolongementJ} that $J$ extends to a Jordan-{*}-isomorphism $\widetilde{J}$ between $L_{0}(\mathcal{N}_{\varphi_{0}},\tau_{\varphi_{0}})$ and $L_{0}(\mathcal{N}_{\varphi_{0}\circ J^{-1}},\tau_{\varphi_{0}\circ J^{-1}})$; moreover, $\widetilde{J}$ is an extension of an isomorphism between $\mathcal{N}_{\varphi_{0}}$ and $\mathcal{N}_{\varphi_{0}\circ J^{-1}}$ as well as a homeomorphism for the measure topology on $L_{0}(\mathcal{N}_{\varphi_{0}},\tau_{\varphi_{0}})$ and $L_{0}(\mathcal{N}_{\varphi_{0}\circ J^{-1}},\tau_{\varphi_{0}\circ J^{-1}})$. The isomorphism $\widetilde{J}$ satisfies the relations
\begin{displaymath}
\begin{split}
&\tau_{\varphi_{0}}\circ \widetilde{J}^{-1}=\tau_{\varphi_{0}\circ J^{-1}}\\
&J^{-1}\circ\Phi_{\varphi_{0}\circ J^{-1}}=\Phi_{\varphi_{0}}\circ\widetilde{J}^{-1}
\end{split}
\end{displaymath}
\begin{lem}\label{J}
For $\varphi\in\mathcal{M}_{*}^{+}$, we have $$\frac{d\tilde{\varphi}^{\varphi_{0}}}{d\tau_{\varphi_{0}}}=\widetilde{J}^{-1}(\frac{d\tilde{\varphi\circ J^{-1}}^{\varphi_{0}\circ J^{-1}}}{d\tau_{\varphi_{0}\circ J^{-1}}}).$$
\end{lem}
\begin{proof}
For all $\varphi\in\mathcal{M}_{*}^{+}$, we have
\begin{displaymath}
\begin{split}
\tau_{\varphi_{0}}(\frac{d\tilde{\varphi}^{\varphi_{0}}}{d\tau_{\varphi_{0}}}\esp .\esp )&=\varphi\circ\Phi_{\varphi_{0}}\\
&=\varphi\circ J^{-1}\circ\Phi_{\varphi_{0}\circ J^{-1}}\circ\widetilde{J}\\
&=\tau_{\varphi_{0}\circ J^{-1}}(\frac{d\tilde{\varphi\circ J^{-1}}^{\varphi_{0}\circ J^{-1}}}{d\tau_{\varphi_{0}\circ J^{-1}}} \widetilde{J}(\esp.\esp))\\
&=\tau_{\varphi_{0}}\circ \widetilde{J}^{-1}(\frac{d\tilde{\varphi\circ J^{-1}}^{\varphi_{0}\circ J^{-1}}}{d\tau_{\varphi_{0}\circ J^{-1}}} \widetilde{J}(\esp.\esp))\\
&=\tau_{\varphi_{0}}(\widetilde{J}^{-1}(\frac{d\tilde{\varphi\circ J^{-1}}^{\varphi_{0}\circ J^{-1}}}{d\tau_{\varphi_{0}\circ J^{-1}}})\esp.\esp)\esp,
\end{split}
\end{displaymath}
where in the last equality we used the fact that $\widetilde{J}$ is Jordan homomorphism.
\end{proof}
In Lemma 2.1 in \cite{watanabe1992poids...}, it is shown that there exists a topological $*$-isomorphism $\widetilde{\mathcal{K}}$ between $L_{0}(\mathcal{N}_{\varphi_{0}},\tau_{\varphi_{0}})$ and $L_{0}(\mathcal{N}_{\varphi_{0}\circ J^{-1}},\tau_{\varphi_{0}\circ J^{-1}})$ which satisfies the following relation on the Radon-Nikodym derivatives :
$$ \widetilde{\mathcal{K}}(\frac{d\tilde{\varphi}^{\varphi_{0}}}{d\tau_{\varphi_{0}}})=\frac{d\tilde{\varphi}^{\varphi_{0}\circ J^{-1}}}{d\tau_{\varphi_{0}\circ J^{-1}}}\textrm{ for all }\varphi\in\mathcal{M}_{*}^{+}.$$
From Lemma \ref{J}, we obtain
$$\frac{d\tilde{\varphi\circ J^{-1}}^{\varphi_{0}}}{d\tau_{\varphi_{0}}}=\widetilde{\mathcal{K}}^{-1}\circ \widetilde{J}(\frac{d\tilde{\varphi}^{\varphi_{0}}}{d\tau_{\varphi_{0}}})\textrm{ for all }\varphi\in\mathcal{M}_{*}^{+}.$$
As a consequence, the linear and bijective isometry $U$ of $L_{p}(\mathcal{M})$ is given by the following relation on positive elements :
$$U(x)=w\esp(\widetilde{\mathcal{K}}^{-1}\circ \widetilde{J}(x))\textrm{ for all }x\in L_{p}(\mathcal{M})^{+}. $$
This relation extends by linearity to the whole $L_{p}(\mathcal{M})$.\\

Now notice that $\widetilde{\mathcal{K}}^{-1}\circ \widetilde{J}$ is a Jordan-isomorphism on $\mathcal{N}_{\varphi_{0}}$ and a topological isomorphism (for the measure topology) on $L_{0}(\mathcal{N}_{\varphi_{0}},\tau_{\varphi_{0}})$. By Proposition \ref{maz}, for $x\in\mathcal{N}_{\varphi_{0}}$, we have
\begin{displaymath}
\begin{split}
V(x)&=M_{p,2}\circ U\circ M_{2,p}(x)\\
&=w(M_{p,2}\circ\widetilde{\mathcal{K}}^{-1}\circ \widetilde{J}\circ M_{2,p}(x))\\
&=w(\widetilde{\mathcal{K}}^{-1}\circ \widetilde{J}(x)).
\end{split}
\end{displaymath}
Recall from \cite{raynaud2002mazurmap} that the Mazur map is continuous for the measure topology on $L_{0}(\mathcal{N}_{\varphi_{0}},\tau_{\varphi_{0}})$. So by density of $\mathcal{N}_{\varphi_{0}}$ in $L_{0}(\mathcal{N}_{\varphi_{0}},\tau_{\varphi_{0}})$ for the measure topology, we have
$$V(x)=w(\widetilde{\mathcal{K}}^{-1}\circ \widetilde{J}(x))\textrm{ for all }x\in L_{2}(\mathcal{M})$$
which gives the linearity of $V$ on $L_{2}(\mathcal{M})$. 
\end{proof}

\begin{rem}
{\rm The proof of the linearity of the map }$V$ {\rm in Proposition \ref{prp7} is simpler in the case where} $\mathcal{M}$ {\rm is a von Neumann algebra equipped with a faithful semi-finite normal trace} $\tau$. {\rm Indeed, by Theorem 2 in \cite{yeadon1980isometries}, there exist a Jordan-isomorphism }$J${\rm, a positive operator }$B$ {\rm commuting with }$J(\mathcal{M})${\rm, and a partial isometry }$W$ {\rm in }$\mathcal{M}$ {\rm with the property that }$W^{*}W$ {\rm is the support of }$B${\rm, such that} 
$$U(x)=WBJ(x)\esp{\rm for\esp all }\esp x\in\mathcal{M}\cap L_{p}(\mathcal{M},\tau).$$
{\rm Using the fact that }$B$ {\rm commutes with }$J(\mathcal{M})${\rm, and as in the proof of Proposition \ref{maz}, for all }$x=\alpha\vert x\vert\in \mathcal{M}\cap L_{p}(\mathcal{M},\tau)${\rm, we have}
\begin{displaymath}
\begin{split}
V(x)&=WM_{p,2}(BJ_{1}(\alpha\vert x\vert^{\frac{p}{2}})+BJ_{2}(\alpha\vert x\vert^{\frac{p}{2}}))\\
&=WM_{p,2}(BJ_{1}(\alpha\vert x\vert^{\frac{p}{2}}))+WM_{p,2}(BJ_{2}(\alpha\vert x\vert^{\frac{p}{2}}))\\
&=WJ_{1}(\alpha)B^{\frac{p}{2}}J_{1}(\vert x\vert)+WJ_{2}(\alpha)B^{\frac{p}{2}}J_{2}(\alpha\vert x\vert\alpha^{*})\\
&=WB^{\frac{p}{2}}J(x).
\end{split}
\end{displaymath}
{\rm The linearity on the whole }$L_{p}(\mathcal{M},\tau)$ {\rm follows from the density of }$\mathcal{M}\cap L_{p}(\mathcal{M},\tau)$ {\rm in }$L_{p}(\mathcal{M},\tau)$.
\end{rem}

\begin{cor}\label{cor}
Let $G$ be a topological group, $p\geq2$, and $U:G\rightarrow O(L_{p}(\mathcal{M}))$ be a representation on $L_{p}(\mathcal{M})$. For $g\in G$, define $V(g):L_{2}(\mathcal{M})\rightarrow L_{2}(\mathcal{M})$ by
$$V(g)=M_{p,2}\circ U(g)\circ M_{2,p}.$$
Then $V$ is a representation of $G$ on $L_{2}(\mathcal{M})$.
\end{cor}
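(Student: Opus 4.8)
The plan is to deduce Corollary \ref{cor} essentially directly from Proposition \ref{prp7}, treating separately the two issues one must verify for a map $V\colon G\to O(L_2(\mathcal{M}))$ to be a representation: that each $V(g)$ lands in $O(L_2(\mathcal{M}))$, that $V$ is multiplicative (a group homomorphism), and that the orbit maps $g\mapsto V(g)x$ are continuous for each $x\in L_2(\mathcal{M})$.

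First I would dispose of the case $p=2$ trivially: then $M_{p,2}=M_{2,p}=\mathrm{id}$, so $V=U$ and there is nothing to prove. For $p>2$, the membership $V(g)\in O(L_2(\mathcal{M}))$ is exactly the content of Proposition \ref{prp7} applied to the isometry $U(g)$. For multiplicativity, I would use Lemma \ref{lem3.4}: for $g,h\in G$,
\begin{displaymath}
\begin{split}
V(g)V(h)&=M_{p,2}\circ U(g)\circ M_{2,p}\circ M_{p,2}\circ U(h)\circ M_{2,p}\\
&=M_{p,2}\circ U(g)\circ U(h)\circ M_{2,p}\\
&=M_{p,2}\circ U(gh)\circ M_{2,p}=V(gh),
\end{split}
\end{displaymath}
where the second equality uses $M_{2,p}\circ M_{p,2}=M_{2,2}=\mathrm{id}$ on $L_2(\mathcal{M})$ (Lemma \ref{lem3.4} with $p=r=2$, $q=p$ composed the other way, or more directly the two Mazur maps between $L_2$ and $L_p$ are mutually inverse), and the third uses that $U$ is a homomorphism. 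In particular $V(e)=\mathrm{id}$ and $V(g)^{-1}=V(g^{-1})$, so $V$ is a genuine group homomorphism into $O(L_2(\mathcal{M}))$.

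The remaining point is strong continuity: for fixed $x\in L_2(\mathcal{M})$, the map $g\mapsto V(g)x=M_{p,2}(U(g)(M_{2,p}(x)))$ must be continuous. Set $y=M_{2,p}(x)\in L_p(\mathcal{M})$; by strong continuity of $U$, the map $g\mapsto U(g)y$ is continuous into $L_p(\mathcal{M})$, and since each $U(g)$ is an isometry, this orbit stays on the sphere of radius $\n y\n_p$ in $L_p(\mathcal{M})$ (for $x\neq 0$; the case $x=0$ is trivial since $M_{p,2}(0)=0$). By Proposition 2.9 (the Raynaud result), $M_{p,2}$ is uniformly continuous on the unit sphere of $L_p(\mathcal{M})$, hence — after rescaling by the homogeneity relation $M_{p,2}(\lambda z)=\lambda M_{p,2}(z)$ for scalars $\lambda>0$, which follows from the definition since the polar part is unchanged and $|\lambda z|^{p/2}=\lambda^{p/2}|z|^{p/2}$, so actually $M_{p,2}(\lambda z)=\lambda^{p/2}M_{p,2}(z)$; one uses this to transfer uniform continuity from the unit sphere to the sphere of radius $\n y\n_p$ — the composition $g\mapsto M_{p,2}(U(g)y)$ is continuous. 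Therefore $g\mapsto V(g)x$ is continuous.

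I expect the only genuinely delicate point to be the continuity step, and specifically making sure the uniform continuity of $M_{p,2}$ is invoked on the correct sphere: Proposition 2.9 is stated for the unit sphere $S(L_p(\mathcal{M}))$, so one must either normalize $y$ to lie on it and push the argument through, using that $U(g)$ preserves the norm and $M_{p,2}$ is positively homogeneous of degree $p/2$, or observe that by the same rescaling $M_{p,2}$ is uniformly continuous on every sphere of fixed radius. Everything else is a formal consequence of Lemma \ref{lem3.4} and Proposition \ref{prp7}, so the proof should be short.
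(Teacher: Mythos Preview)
Your proof is correct and follows essentially the same route as the paper: Proposition \ref{prp7} for $V(g)\in O(L_2(\mathcal{M}))$, Lemma \ref{lem3.4} for multiplicativity, and continuity of $M_{p,2}$ for strong continuity of the orbit maps. The only difference is cosmetic: the paper simply asserts that $M_{p,2}\colon L_p(\mathcal{M})\to L_2(\mathcal{M})$ is continuous, whereas you spell this out via homogeneity and the uniform continuity on the unit sphere.
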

\begin{proof}
By the previous proposition, $V(g)\in O(L_{2}(\mathcal{M}))$ for every $g$ in $G$. Moreover, the map $g\mapsto V(g)x$ is continuous, since $g\mapsto U(g)M_{2,p}(x)$ is continuous and since $M_{p,2}:L_{p}(\mathcal{M})\rightarrow L_{2}(\mathcal{M})$ is continuous.\\
It remains to check that $V$ is a homomorphism. For this, let $g_{1},g_{2}\in G$. Then, by Lemma \ref{lem3.4},
\begin{displaymath}
\begin{split}
V(g_{1})V(g_{2})&=M_{p,2}\circ U(g_{1})\circ M_{2,p}\circ M_{p,2}\circ U(g_{2})\circ M_{2,p}\\
&=M_{p,2}\circ U(g_{1})\circ U(g_{2})\circ M_{2,p}\\
&=M_{p,2}\circ U(g_{1}g_{2})\circ M_{2,p}\\
&=V(g_{1}g_{2}).
\end{split}
\end{displaymath}
\end{proof}

Let $U$ be a representation of a topological group $G$ on $L_{p}(\mathcal{M})$ and let
$$L_{p}(\mathcal{M})^{U(G)}=\{x\in L_{p}(\mathcal{M})\esp\vert\esp U(g)x=x\textrm{ for all }g\in G\esp\}$$
be the space of $U(G)$-invariant vectors in $L_{p}(\mathcal{M})$. Let $p'$ be the conjugate of $p$ and $U^{*}$ the contragredient representation of $U$ on the dual space $L_{p'}(\mathcal{M})$ of $L_{p}(\mathcal{M})$. Since $L_{p}(\mathcal{M})$ is superreflexive, there exists a complement $L_{p}(\mathcal{M})^{'}$ for $L_{p}(\mathcal{M})^{U(G)}$ (see Proposition 2.6 in \cite{bader2007propertyTLp}) and we have 
$$L_{p}(\mathcal{M})^{'}=\{v\in L_{p}(\mathcal{M})\esp\vert\esp{\rm Tr}(vc)=0\esp\textrm{for all}\esp c\in L_{p'}(\mathcal{M})^{U^{*}(G)}\}.$$
\begin{prp}\label{prp}\label{prp8}
Let $v\in S(L_{p}(\mathcal{M})^{'})$, then 
$$d(v,L_{p}(\mathcal{M})^{U(G)})\geq\frac{1}{2}.$$
\end{prp}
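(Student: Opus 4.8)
The plan is to exploit the direct-sum-like decomposition $L_{p}(\mathcal{M}) = L_{p}(\mathcal{M})^{U(G)} \oplus L_{p}(\mathcal{M})'$ together with the duality pairing. Fix $v \in S(L_{p}(\mathcal{M})')$ and let $w \in L_{p}(\mathcal{M})^{U(G)}$ be arbitrary. I want a lower bound on $\n v - w \n_{p}$. The natural functional to test against is a norming functional for $v$ in the dual $L_{p'}(\mathcal{M})$: by Proposition \ref{prp3}, the duality map sends $v$ to $c := M_{p,p'}(v)^{*} \in S(L_{p'}(\mathcal{M}))$, and it satisfies ${\rm Tr}(cv) = {\rm Tr}(M_{p,p'}(v)^{*}v) = 1$ and $\n c \n_{p'} = 1$. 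The obstacle is that this $c$ need not be $U^{*}(G)$-invariant, so I cannot immediately conclude ${\rm Tr}(cw) = 0$ from the description of $L_{p}(\mathcal{M})'$.

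To fix this, the idea is to average. Since the distance from $v$ to the $G$-invariant subspace is what we are after, I would consider instead the orbit of the norming functional or, more robustly, argue as follows. Let $P$ denote the projection onto $L_{p'}(\mathcal{M})^{U^{*}(G)}$ along the complement $L_{p'}(\mathcal{M})'$ (the analogous decomposition for the dual representation, which exists by superreflexivity and Proposition 2.6 of \cite{bader2007propertyTLp}). The key fact is that $L_{p}(\mathcal{M})' = \ker(c \mapsto {\rm Tr}(vc))$ restricted appropriately; more precisely, $v \in L_{p}(\mathcal{M})'$ means ${\rm Tr}(vc) = 0$ for every $c \in L_{p'}(\mathcal{M})^{U^{*}(G)}$, i.e. $v$ annihilates $L_{p'}(\mathcal{M})^{U^{*}(G)}$. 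Dually, $w \in L_{p}(\mathcal{M})^{U(G)}$ is annihilated by $L_{p'}(\mathcal{M})'$, the complement of $L_{p'}(\mathcal{M})^{U^{*}(G)}$. Hence for the norming functional $c \in S(L_{p'}(\mathcal{M}))$ of $v$, write $c = c_{0} + c_{1}$ with $c_{0} \in L_{p'}(\mathcal{M})^{U^{*}(G)}$ and $c_{1} \in L_{p'}(\mathcal{M})'$. Then ${\rm Tr}(vc_{0}) = 0$ (since $v \in L_{p}(\mathcal{M})'$), so $1 = {\rm Tr}(vc) = {\rm Tr}(vc_{1})$; and ${\rm Tr}(wc_{1}) = 0$ (since $w \in L_{p}(\mathcal{M})^{U(G)}$ and $c_1 \in L_{p'}(\mathcal{M})'$).

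Now I estimate: $\n v - w \n_{p} \geq \vert {\rm Tr}((v-w)c_{1}) \vert / \n c_{1} \n_{p'} = \vert {\rm Tr}(vc_{1}) - {\rm Tr}(wc_{1}) \vert / \n c_{1} \n_{p'} = 1 / \n c_{1} \n_{p'}$. So it remains to bound $\n c_{1} \n_{p'} \leq 2$. This is exactly where the complement structure of \cite{bader2007propertyTLp} pays off: the projection onto $L_{p'}(\mathcal{M})'$ along $L_{p'}(\mathcal{M})^{U^{*}(G)}$ has norm at most $2$ — indeed Proposition 2.6 of \cite{bader2007propertyTLp} produces the invariant/complement splitting via a norm-one projection onto the invariants (a limit of averages of isometries, hence contractive), so the complementary projection $\mathrm{id} - (\text{that projection})$ has norm at most $2$. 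Therefore $\n c_{1} \n_{p'} = \n (\mathrm{id} - P)(c) \n_{p'} \leq 2 \n c \n_{p'} = 2$, whence $\n v - w \n_{p} \geq 1/2$. Taking the infimum over $w \in L_{p}(\mathcal{M})^{U(G)}$ gives $d(v, L_{p}(\mathcal{M})^{U(G)}) \geq 1/2$.

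The main obstacle, and the point that needs care, is the claim that the projection onto the invariants is contractive (norm one), so that its complement has norm at most $2$; this should follow from the explicit construction in Proposition 2.6 of \cite{bader2007propertyTLp} as an ergodic-type average of the isometries $U^{*}(g)$, but one must make sure the cited statement gives a genuinely norm-one projection and not merely a bounded one — if it only gives a bounded projection of norm $C$, the conclusion degrades to $d(v, L_{p}(\mathcal{M})^{U(G)}) \geq 1/C$, which still suffices for the applications to property $(T_B)$ but changes the numerical constant. A secondary point is verifying the annihilator descriptions of $L_{p}(\mathcal{M})'$ and $L_{p}(\mathcal{M})^{U(G)}$ are exactly dual to each other under the ${\rm Tr}$-pairing, which is routine from reflexivity of $L_p(\mathcal{M})$ and the stated formula for $L_{p}(\mathcal{M})'$.
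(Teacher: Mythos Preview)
Your argument is correct and does give the constant $\tfrac12$, but the justification you offer for the key inequality $\|c_{1}\|_{p'}\le 2$ (``a limit of averages of isometries'') would require $G$ to be amenable. The projection onto invariants is nevertheless contractive: given a unit functional $\psi\in L_{p}(\mathcal{M})$ norming $Pc$, apply the Ryll--Nardzewski theorem to the weakly compact closed convex hull of the $U(G)$-orbit of $\psi$ to obtain an invariant $\bar\psi$ in the unit ball; since $\bar\psi$ agrees with $\psi$ on the invariant vector $Pc$, and since $\bar\psi\in L_{p}(\mathcal{M})^{U(G)}$ annihilates $c-Pc\in L_{p'}(\mathcal{M})'$ by the very definition of the complement, one gets $\|Pc\|=|\bar\psi(c)|\le\|c\|$. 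So $\|c_1\|_{p'}\le 2$ and your estimate stands.

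The paper avoids this detour entirely by dualizing on the other side. Assuming for contradiction that some invariant $b$ has $\|v-b\|_{p}<\tfrac12$, it normalizes to $c=b/\|b\|_{p}$ (so $\|b-c\|_{p}\le\tfrac12$) and uses the norming functional $M_{p,p'}(c)^{*}$ of $c$, which is automatically $U^{*}(G)$-invariant by equivariance and uniqueness of the duality map on a smooth space. Then ${\rm Tr}(v\,M_{p,p'}(c)^{*})=0$ directly from the description of $L_{p}(\mathcal{M})'$, so ${\rm Tr}((c-v)M_{p,p'}(c)^{*})=1$; H\"older gives $\|c-v\|_{p}\ge1$, and the triangle inequality yields the contradiction $\|v-b\|_{p}\ge\tfrac12$. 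In short: you dualize $v$ and then project in the dual, which needs the Ryll--Nardzewski bound on the projection; the paper dualizes a nearby \emph{invariant} vector, whose norming functional is already invariant, so no projection estimate is needed at all.
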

\begin{proof}
Assume, by contradiction, that there exists $b\in L_{p}(\mathcal{M})^{U(G)}$ such that
$$\vert\vert v-b\vert\vert_{p}<\frac{1}{2}.$$
Then $\frac{1}{2}\leq\vert\vert b\vert\vert_{p}\leq\frac{3}{2}$. Setting $c=\dfrac{b}{\vert\vert b\vert\vert_{p}}$, we have $\vert\vert b-c\vert\vert_{p}\leq\frac{1}{2}$.\\
Since $c\in L_{p}(\mathcal{M})^{U(G)}$, it is easily checked that $M_{p,p'}(c)^{*}\in L_{p'}(\mathcal{M})^{U^{*}(G)}$; hence
$${\rm Tr}((c-v)M_{p,p'}(c)^{*})={\rm Tr}(cM_{p,p'}(c)^{*})=\vert\vert c\vert\vert_{p}^{p}=1.$$
On the other hand, using H\"{o}lder's inequality, we have
\begin{displaymath}
\begin{split}
1&={\rm Tr}((c-v)M_{p,p'}(c)^{*})\\
&\leq\vert\vert c-v\vert\vert_{p}\vert\vert M_{p,p'}(c)^{*}\vert\vert_{p'}\\
&=\vert\vert c-v\vert\vert_{p}\vert\vert c\vert\vert_{p}^{\frac{p}{p'}}\\
&=\vert\vert c-v\vert\vert_{p}.
\end{split}
\end{displaymath}
This implies that
\begin{displaymath}
\begin{split}
\vert\vert v-b\vert\vert_{p}&\geq\vert\vert v-c\vert\vert_{p}-\vert\vert c-b\vert\vert_{p}\\
&\geq\frac{1}{2}
\end{split}
\end{displaymath}
and this is a contradiction.
\end{proof}

\section{\sc{Proof of Theorem \ref{thm1}}}

We follow the strategy of the proof of Theorem A in \cite{bader2007propertyTLp}. Let $p\in]1,\infty[$. and let $U$ be a representation on $L_{p}(\mathcal{M})$ of a group $G$. Let $H$ be a closed subgroup of $G$ such that the pair $(G,H)$ has property $(T)$. We claim that the representation $U'$ of $G$ on the complement $L_{p}(\mathcal{M})^{'}$ of $L_{p}(\mathcal{M})^{U(H)}$ has no almost $U'(G)$-invariant vectors. This will prove Theorem \ref{thm1}.\\
Let $Q$ be a compact subset in $G$, and take $\epsilon>0$. Assume by contradiction that there exists almost $U(G)$-invariant vectors in $L_{p}(\mathcal{M})^{'}$. Then, we can find, for every $n$, a unit vector $v_{n}$ such that 
$$\sup_{g\in Q}\vert\vert U(g)v_{n}-v_{n}\vert\vert_{p}<\frac{1}{n}.$$\\
By Corollary \ref{cor}, $V=M_{p,2}\circ U\circ M_{2,p}$ defines a representation of $G$ on $L_{2}(\mathcal{M})$.
Let $w_{n}$ be the orthogonal projection of $M_{p,2}(v_{n})$ on the orthogonal complement $L_{2}(\mathcal{M})^{'}$ of $L_{2}(\mathcal{M})^{V(H)}$. We claim that $w_{n}$ is $(Q,\epsilon)$-invariant for $V$ for $n$ sufficiently large. This will contradict property $(T)$ for the pair $(G,H)$.\\

We first show that there exists $\delta^{'}>0$ such that 
$$d(M_{p,2}(v_{n}),L_{2}(\mathcal{M})^{V(H)})\geq\delta^{'}\esp\textrm{for all}\esp n.$$
Indeed, otherwise for some $n$, there exists $a_{k}\in L_{2}(\mathcal{M})^{V(H)}$ such that $$\vert\vert M_{p,2}(v_{n})-a_{k}\vert\vert_{2}\xrightarrow[k \rightarrow\infty]{} 0.$$
By Proposition \ref{prp3.5}, we have
$$\vert\vert M_{p,2}(v_{n})\vert\vert_{2}=\vert\vert v_{n}\vert\vert_{p}^{\frac{p}{2}}=1. $$
Since $\vert\vert a_{k}\vert\vert_{2}\xrightarrow[k \rightarrow\infty]{}\vert\vert M_{p,2}(v_{n})\vert\vert_{2}=1$, we can assume that $\vert\vert a_{k}\vert\vert_{2}=1$. Notice that 
$$M_{2,p}(L_{2}(\mathcal{M})^{V(H)})=L_{p}(\mathcal{M})^{U(H)}.$$
Hence, $M_{2,p}(a_{k})$ belongs to $L_{p}(\mathcal{M})^{U(H)}$ for every $k$. Moreover $$\vert\vert v_{n}-M_{2,p}(a_{k})\vert\vert_{p}\xrightarrow[k\rightarrow\infty]{}0$$ by the uniform continuity of $M_{2,p}$ on the unit sphere (see Proposition \ref{prp3}). This is a contradiction to Proposition \ref{prp8}. \\
In particular, we have 
$$\vert\vert w_{n}\vert\vert_{2}=d(M_{p,2}(v_{n}),L_{2}(\mathcal{M})^{V(H)})\geq\delta^{'}.$$

For $g\in Q$, we have
\begin{displaymath}
\begin{split}
\vert\vert V(g)w_{n}-w_{n}\vert\vert_{2}&\leq\vert\vert V(g)M_{p,2}(v_{n})-M_{p,2}(v_{n})\vert\vert_{2}\\
&=\vert\vert M_{p,2}(U(g)v_{n})-M_{p,2}(v_{n})\vert\vert_{2}.
\end{split}
\end{displaymath}
Recall that $\vert\vert v_{n}\vert\vert_{p}^{\frac{p}{2}}=1$ and that
$$\sup_{g\in Q}\vert\vert U(g)v_{n}-v_{n}\vert\vert_{p}<\frac{1}{n}.$$
Hence, by the uniform continuity of $M_{p,2}$ on $S(L_{2}(\mathcal{M}))$, there exists an integer $N$ (depending only on ($Q,\epsilon$)) such that
$$\sup_{g\in Q}\vert\vert V(g)w_{n}-w_{n}\vert\vert_{2}<\epsilon \delta^{'}\esp\textrm{for}\esp n\geq N.$$
Since $\vert\vert w_{n}\vert\vert_{2}\geq\delta^{'}$, it follows that
$$\sup_{g\in Q}\vert\vert V(g)w_{n}-w_{n}\vert\vert_{2}<\epsilon\vert\vert w_{n}\vert\vert_{2}\esp\textrm{for}\esp n\geq N.$$
This shows that $w_{n}$ is $(Q,\epsilon)$-invariant for $U$ when $n\geq N$. This finishes the proof of Theorem \ref{thm1}.

\section{\sc{Property $(F_{L_{p}(\mathcal{M})})$ for higher rank groups}}
Let $H$ be a closed normal subgroup of $G$ and let $L$ be a closed group of $G$. Assume that $G=L\ltimes H$. The following strong relative property $(T_{B})$ was considered in \cite{bader2007propertyTLp} :
\begin{df}
\rm{}A pair $(L\ltimes H,H)$ has property $(T_{B})$ if, for any orthogonal representation $\rho:L\ltimes H\rightarrow O(B)$, the quotient representation $\rho{'}:L\rightarrow O(B/B^{\rho(H)})$ does not almost have $\rho{'}(L)$-invariant vectors.
\end{df}
A straightforward modification of our proof of Theorem \ref{thm1} shows that we also have the following result :
\begin{thm}\label{thm1.5}
Let $(L\ltimes H,H)$ be a pair with strong relative property $(T)$. Then $(L\ltimes H,H)$ has strong relative property $(T_{L_{p}(\mathcal{M})})$ for $1<p<\infty$.
\end{thm}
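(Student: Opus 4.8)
The plan is to mirror, line by line, the proof of Theorem \ref{thm1} given in Section 4, checking that every step respects the stronger hypothesis where the almost-invariance and the passage to a non-zero invariant vector concern the action of $L$ rather than the action of the full group $L \ltimes H$. So I would start by fixing $p \in ]1,\infty[$, an orthogonal representation $U : L \ltimes H \to O(L_p(\mathcal{M}))$, and I would consider the representation $U'$ of $L \ltimes H$ on the complement $L_p(\mathcal{M})'$ of $L_p(\mathcal{M})^{U(H)}$; note that $L_p(\mathcal{M})^{U(H)}$ is $U(L\ltimes H)$-invariant because $H$ is normal, so $U'$ is well defined, and restricting to $L$ gives the quotient representation $\rho'$ of the definition. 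The claim to prove is that $\rho' : L \to O(L_p(\mathcal{M})/L_p(\mathcal{M})^{U(H)})$ does not almost have $\rho'(L)$-invariant vectors.

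Next I would argue by contradiction exactly as in Section 4: suppose there is a compact $Q \subseteq L$ and $\epsilon > 0$ for which one finds, for every $n$, a unit vector $v_n \in L_p(\mathcal{M})'$ with $\sup_{g \in Q} \n U(g)v_n - v_n\n_p < 1/n$. Now I apply Corollary \ref{cor} to the representation $U$ of the group $L \ltimes H$ (for $p \geq 2$; for $1 < p < 2$ one works with the contragredient on $L_{p'}(\mathcal{M})$, $p' > 2$, as in the original argument) to obtain a unitary representation $V = M_{p,2} \circ U \circ M_{2,p}$ of $L \ltimes H$ on $L_2(\mathcal{M})$. I would then let $w_n$ be the orthogonal projection of $M_{p,2}(v_n)$ onto the orthogonal complement $L_2(\mathcal{M})'$ of $L_2(\mathcal{M})^{V(H)}$, and verify, verbatim as in Section 4, the two quantitative facts: first that $d(M_{p,2}(v_n), L_2(\mathcal{M})^{V(H)}) \geq \delta'$ for some $\delta' > 0$ independent of $n$ — this uses $M_{2,p}(L_2(\mathcal{M})^{V(H)}) = L_p(\mathcal{M})^{U(H)}$, Proposition \ref{prp3.5}, uniform continuity of $M_{2,p}$ on the sphere (Proposition stated from \cite{raynaud2002mazurmap}), and Proposition \ref{prp8}; and second that $\sup_{g \in Q}\n V(g)w_n - w_n\n_2 < \epsilon\n w_n\n_2$ for $n$ large, using again uniform continuity of $M_{p,2}$ on the unit sphere together with $\n M_{p,2}(v_n)\n_2 = 1$. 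None of these estimates uses anything about $Q$ other than $Q \subseteq G$, so they go through unchanged with $Q \subseteq L$.

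The point where the stronger statement actually intervenes — and the only place the argument genuinely differs from Section 4 — is the conclusion: the vectors $w_n$ are $(Q,\epsilon)$-invariant for $V$ with $Q \subseteq L$, and $w_n$ lies in $L_2(\mathcal{M})'$, the orthogonal complement of the $V(H)$-invariant vectors. By the Hilbert-space theory of property $(T)$, strong relative property $(T)$ for the pair $(L \ltimes H, H)$ says precisely that for any unitary representation of $L \ltimes H$, the representation of $L$ on the orthogonal complement of the $H$-fixed vectors has no $(Q,\epsilon)$-almost invariant vectors for a suitable Kazhdan pair $(Q,\epsilon)$; applying this to $V$ and that pair yields the contradiction. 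So the proof is a routine adaptation, and I expect the main (mild) obstacle to be purely bookkeeping: making sure that all the invariance subspaces are taken with respect to $H$, that normality of $H$ is invoked to make the quotient representations of $L$ well defined, and that the case $1 < p < 2$ is handled by the same contragredient trick used implicitly in Section 4 so that Corollary \ref{cor} applies. I would therefore simply state that the proof of Theorem \ref{thm1} carries over \emph{mutatis mutandis}, replacing $G$ by $L$ throughout the almost-invariance statements while keeping $H$ as the subgroup whose invariant vectors are quotiented out, and pointing explicitly to the two displayed estimates above as the only computations that need re-reading.
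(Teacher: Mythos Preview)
Your proposal is correct and matches the paper's approach exactly: the paper does not give a separate argument for Theorem \ref{thm1.5} but simply states that a straightforward modification of the proof of Theorem \ref{thm1} yields the result. Your detailed walkthrough --- replacing $G$ by $L$ in the almost-invariance hypotheses while keeping $H$ as the subgroup whose fixed vectors are quotiented out, and invoking strong relative property $(T)$ at the final step --- is precisely that modification spelled out.
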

Let $G$ be a higher rank group as defined in the introduction. Using an analogue of Howe-Moore's theorem on vanishing of matrix coefficients, the authors of  \cite{bader2007propertyTLp} showed that $G$ has property $(F_{B})$ whenever $B$ is a superreflexive Banach space and a certain pair $(L\ltimes H,H)$ of subgroups, which has property $(T)$, has also $(T_{B})$. The property $(F_{L_{p}(\mathcal{M})})$ for higher rank groups in Theorem 1.6 is then a consequence of Theorem 5.2. Moreover, the result for lattices in higher rank groups is obtained by an induction process exactly as in the Proposition 8.8 of \cite{bader2007propertyTLp}.

\section{\sc{Acknowledgements}}
We wish to thank Bachir Bekka for all his very useful advice and the IRMAR for the stimulating atmosphere and the quality of working conditions. We are also grateful to Masato Mimura for very useful discussions.

\bibliographystyle{plain}
\bibliography{biblio}

\end{document}